\documentclass{amsart}
\usepackage{graphicx}
\usepackage{amssymb}
\usepackage{graphicx}
\usepackage{amsfonts}
\usepackage{amsmath}
\usepackage{amsthm}
\usepackage{esint}
\usepackage{mathabx}
\usepackage{mathtools}
\usepackage{nccmath}
\usepackage[colorlinks]{hyperref}

\newtheorem{thm}{Theorem}[section]

\newtheorem{lem}[thm]{Lemma}
\newtheorem{prop}[thm]{Proposition}
\theoremstyle{definition}

\theoremstyle{remark}
\newtheorem{rem}[thm]{Remark}
\theoremstyle{remark}
\newtheorem{ex}[thm]{Example}

\begin{document}

\title[Low Codimensional singularities]{Solutions to Monge-Ampère Equations with Low Codimensional singularities}

\author{Arghya Rakshit}
\address{Department of Mathematics, University of Toronto}
\email{\tt arghya.rakshit@utoronto.ca}
\author{Aranya Sen}
\address{Department of Mathematics, UC Irvine}
\email{\tt aranyas1@uci.edu}
%AMS Subject Classification
\subjclass[2020]{
49Q22, % Optimal transportation
35J96. %Monge-Ampère equations
}

% Keywords and Phrases
\keywords{Optimal transportation, Monge-Amp\`{e}re equations, Perron's Method}
\date{\today}

% ----------------------------------------------------------------

%We construct solutions to Monge--Amp\`ere equations whose Monge--Amp\`ere
%measures contain singular components supported on sets of low codimension.
%We also study the regularity of these solutions. To motivate our construction,
%we present examples arising from optimal transport where the potentials of the
%optimal transport maps satisfy Monge--Amp\`ere equations similar to the ones we study.

\begin{abstract}
We construct solutions to Monge--Amp\`ere equations whose Monge--Amp\`ere measures contain singular components supported on low codimensional sets. We also study the regularity of such solutions. To motivate our construction, we present examples arising from optimal transport where the potential of optimal transport maps satisfy Monge–Ampère equations similar to the ones we study.

\end{abstract}
\maketitle

% ----------------------------------------------------------------

%Monge–Ampère

%%%%%%%%%%%%%%
\section{Introduction}\label{intro}
Solutions of Monge–Ampère equations with singular Monge–Ampère measures arise in many settings, such as optimal transport and mirror symmetry (see \cite{CL2}, \cite{H1}, \cite{HJMM}, \cite{JX1}, \cite{L1}, \cite{Lo1}, and \cite{LYZ1}). They also show up in the recent work of Jin, Tu and Xiong (see \cite{JTX1}, and \cite{JTX2}), while trying to study sharp estimates of the exponent in the Alexandrov estimate. Motivated by questions arising in mirror symmetry, the construction of solutions to Monge–Ampère equations with Monge-Ampère measure having isolated singularities was studied in \cite{M1} and \cite{MR1}. It was observed that singularities in solutions to such Monge–Ampère equations can also arise away from the point masses. In particular it was shown that solutions with singularities of dimension at most $n/2$ can be constructed. The construction in \cite{MR1} was carried out by solving an obstacle problem where the obstacle was supported only at a finite number of points. In this article we investigate solutions of an obstacle problem where the obstacle is supported on a low codimension set.  We construct solutions of Monge-Amp\`ere equations with Monge–Ampère measures supported on sets of low codimension and study their smoothness away from these low codimensional sets. In particular, the solutions we construct are for codimensions less than $2$ away from the support of the obstacle. We also study the singularities when codimension is more than $2$. These constructions are inspired from optimal transport of nonconvex domains. We also explore this connection through a series of examples.  We describe our results when these sets are a line segment, boundary of a convex polytope,  unions of line segments, and boundary of convex sets with $C^1$ boundary.  \\

First we fix some notations. Let $n\geq 2$ and $1\leq k\leq n-1$. We write coordinates as $(x,y)\in \mathbb{R}^{n-k}\times \mathbb{R}^k$. By $B_R$, we mean an open ball of radius $R$ centered at the origin in $\mathbb{R}^n$. We denote by $\mathcal{H}^k$ the $k$-dimensional Hausdorff measure in $\mathbb{R}^n$.   For a detailed discussion on Hausdorff measure, see \cite{EG}.  For a convex function $f$ defined on a subset of $\mathbb{R}^n$, $\partial f(x)$ denotes the subdifferential of $f$ at $x$. For a set $S\subset \mathbb{R}^n$, $\partial S$ denotes the boundary of $S$ in $\mathbb{R}^n$. So the notation $\partial$ should be clear depending on context. For $a>0$, define 
\[
S_a = \{0\}^{n-k}\times (-a,a)^k.\]
We also assume $P$ to be a convex polytope whose vertices lie on or outside a sphere $B_a$, and for any $0<\alpha<1$ define $\Lambda_{\alpha}:= \partial P\cap B_{\alpha a}$. Furthermore, let $P[k]$ denote the $k$-skeleton of $P$. We prove the following theorems:
\begin{thm}\label{line}
    For any $a>0$ and $\alpha\in (0,1)$,
    there exists a convex function $u\in C^\infty(\mathbb{R}^n\setminus S_a)$, and a non-negative function $f \in L^{\infty}(S_a)$, that solves
    $$\det(D^2u) = 1+f\mathcal{H}^{k}|_{S_a}$$
    such that $f \geq c>0$ on $S_{\alpha a}$, where $c$ is a constant that depends on $a$ and $n$ .
\end{thm}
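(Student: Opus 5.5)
We would construct $u$ by Perron's method as the solution of an obstacle-type problem: the obstacle is a function that is $0$ on $S_a$ and $+\infty$ elsewhere. Concretely, fix a large ball $B_R$ containing $\overline{S_a}$ and let
\[
\mathcal{F}=\{v \text{ convex on } \overline{B_R}:\ \det D^2 v\ge 1 \text{ in } B_R\setminus \overline{S_a},\ v\le 0 \text{ on } S_a,\ v\le \varphi \text{ on } \partial B_R\},
\]
with $\varphi$ a suitable quadratic boundary datum, for instance $\varphi=\tfrac12|z|^2 + C$ in the variable $z=(x,y)$. Set $u=\sup\mathcal{F}$. Standard Perron arguments then give the following. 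The class is nonempty: take a sufficiently steep paraboloid minus a constant. $u$ is convex and attains the boundary data. $\det D^2u=1$ on $B_R\setminus\overline{S_a}$, by replacing $u$ on small balls with the Dirichlet solution (Caffarelli's theory). Finally $u=0$ on $S_a$, by comparing with the supremum of admissible competitors touching $0$ there. To get a global solution on $\mathbb{R}^n$, we would instead use boundary data at infinity asymptotic to a quadratic $\tfrac12|z|^2+\text{const}$, or simply work in $B_R$; the statement only concerns the local structure near $S_a$. Interior smoothness on $B_R\setminus \overline{S_a}$ then requires strict convexity there.

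The key steps, in order, are these.

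\emph{Step 1: Perron construction.} Carry out the construction above, with $u\equiv 0$ on $S_a$ and $\det D^2 u=1$ off $S_a$ in the Alexandrov sense.

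\emph{Step 2: strict convexity and smoothness off $S_a$.} By Caffarelli, any line segment in the graph of $u$ through a point of $B_R\setminus S_a$ must extend to the boundary of the domain where the equation holds. Here that means it ends on $\partial B_R$ (excluded by the strictly convex boundary data) or on $\overline{S_a}$. A segment with both endpoints on $S_a$ lies in the affine $k$-plane $\{x=0\}$ and hence inside $S_a$, since $S_a$ is convex. So we must rule out a segment from a point of $S_a$ to $\partial B_R$, or a ray ending at a single point of $S_a$.

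For this we use the dimension restriction implicit in the setting: the singular set has codimension $n-k\ge 1$. Caffarelli's theorem says singular sets of solutions with $\det D^2 u\ge 1$ have dimension $<n/2$ when they are not driven by the boundary. A cleaner route exploits symmetry. The problem is invariant under $x\mapsto Ox$ for $O\in O(n-k)$ and under reflections $y_i\mapsto -y_i$, so $u$ inherits these symmetries by uniqueness of the Perron solution. A segment leaving a point $p\in S_a$ in a direction with nonzero $x$-component would, after rotating in $x$, produce a cone of segments, so $u$ would be affine on a set of positive $(n-k)$-dimensional extent around $p$. Comparing volumes of $\partial u$ (Alexandrov) contradicts $\det D^2u=1$ nearby. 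With strict convexity in hand, Caffarelli's $C^{2,\alpha}$ theory and bootstrapping give $u\in C^\infty(B_R\setminus S_a)$.

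\emph{Step 3: the measure on $S_a$ and its density.} Since $u$ is convex with $u=0$ on $S_a$ and smooth off it, $\partial u(p)$ for $p\in S_a$ is contained in $\{\xi: \xi_y=0\}\times\mathbb{R}^{n-k}$-type sets. More precisely, the $y$-gradient vanishes in the interior of $S_a$, so $|\partial u(p)|$ lives in the $(n-k)$-dimensional normal directions. Hence $\mu=\det D^2u$ restricted to $S_a$ is $f\,\mathcal{H}^k$, with $f(y)=\mathcal{H}^{n-k}(\partial u(0,y))$ computed by the coarea structure. The bound $f\in L^\infty$ comes from a Lipschitz bound on $u$ in $B_R$, which bounds $\partial u$.

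For the lower bound on $S_{\alpha a}$ we use a barrier. For $p=(0,y_0)$ with $|y_0|_\infty<\alpha a$, compare $u$ with a function like $w(x,y)=\delta|x| + \tfrac12|x|^2+ A(|y-y_0|^2 -\cdots)$. This is built so that $w\le u$ near $p$, $w(p)=u(p)=0$, and $w$ is a subsolution off $S_a$, with the $y$-part controlled by the distance $(1-\alpha)a$ to $\partial S_a$. It forces $\partial u(p)\supset \{|\xi_x|\le \delta\}$, giving $f\ge c\,\delta^{n-k}$. Equivalently, one can argue that $u$ grows at least linearly in $|x|$ uniformly on $S_{\alpha a}$, since $u\ge 0$ vanishes on $S_a$ and satisfies $\det D^2u=1$.

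The main obstacle is this quantitative lower bound, together with the strict-convexity part of Step 2. Concretely, we must show that $u$ does not vanish to second order in $x$ at interior points of $S_a$. If it did, the total mass of $\partial u$ would be too small, and we would exploit this via mass balance: $|\partial u(B_R)|$ is fixed by the boundary data, so $\int_{S_a}f$ equals $|\partial\varphi(B_R)|-|B_R\setminus S_a|>0$. Combined with a Harnack-type monotonicity of $f$ in $y$ from convexity and the reflection symmetry, positivity then propagates to $S_{\alpha a}$. We would try the explicit barrier first and fall back to this mass-balance argument.
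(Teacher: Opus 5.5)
Your construction fails at the choice of obstacle: a \emph{flat} obstacle ($0$ on $S_a$) cannot produce $f\mathcal{H}^k|_{S_a}$ with $f>0$.

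Suppose $u\equiv 0$ on $S_a$ and $p\in S_a$. Testing $u(q)\ge u(p)+\xi\cdot(q-p)$ at $q=p\pm t e_j$, for the $y$-directions $e_j$, forces $\xi_y=0$ for every $\xi\in\partial u(p)$. Hence $\partial u(S_a)\subset\mathbb{R}^{n-k}\times\{0\}$, which is Lebesgue-null, so $\mathcal{M}u(S_a)=|\partial u(S_a)|=0$. If instead $u\not\equiv 0$ on $S_a$, convexity and $u\le 0$ give $u<0$ on all of $S_a$. Then $S_a$ lies in the open set $\{u<g\}$, where $\mathcal{M}u=dx$, and again $S_a$ carries no mass.

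This is where your Step 3 formula $f(y)=\mathcal{H}^{n-k}(\partial u(0,y))$ goes wrong. The density with respect to $\mathcal{H}^k$ is the $\mathcal{H}^{n-k}$-measure of the $\xi_x$-slice of $\partial u(0,y)$, times the Jacobian of $y\mapsto \nabla_y u(0,y)$. That map is identically $0$ here.

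For $C>0$ the obstacle is active, so $\mathcal{M}u-dx$ is a nonzero measure. But it is forced onto the relative boundary $\overline{S_a}\setminus S_a$. For $n=2$, $k=1$ you get two Dirac masses at the endpoints. Moreover, by Alexandrov's strict convexity theorem in the plane, $u$ cannot even vanish on the open segment, so your Step 1 claim $u=0$ on $S_a$ is false there.

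Neither fallback helps. No admissible function that is uniformly convex in $y$ can stay $\le 0$ on $S_a$ and touch $0$ at an interior point. Mass balance only shows that $\overline{S_a}$ carries mass, and here that mass lives on its relative boundary.

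The missing idea is that the obstacle must be uniformly convex along $S_a$. Then at contact points the $y$-component of every subgradient is pinned to $\nabla_y g$ and sweeps out a $k$-dimensional set with nondegenerate Jacobian.

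The paper rescales to $a=\epsilon$ and takes $g=\tfrac12|y|^2$ on the inner part of the support, steepening it near the relative boundary ($\varphi_\alpha'\to\infty$). The locally Lipschitz $u_R$ then cannot touch $g$ near the edge. At the relative boundary of the contact set, $\xi_y=\nabla_y g$ is still pinned, so no Dirac or edge masses appear.

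For the lower bound, the paper uses the competitor equal to $\max\{\tfrac12|z|^2+\tfrac14|x|,\,W_n-W_n(\epsilon\mathbf{e})\}$ in $B_\rho$ and to $W_n-W_n(\epsilon\mathbf{e})$ outside. It is admissible and agrees with $g$ on $S_{\alpha\epsilon}$. So $u_R$ is squeezed between it and $g$ there, and $\partial u_R(0,y)\supset\{|\xi_x|\le\tfrac14\}\times\{y\}$, which gives $f\ge c$.

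Smoothness on all of $\mathbb{R}^n\setminus S_a$ then comes from letting $R\to\infty$, using $W_n-W_n(\epsilon\mathbf{e})\le u_R\le W_n+10$. One applies the Caffarelli--Li exterior theorem on shrinking convex neighborhoods of $S_a$.

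Your Step 2 rotation argument would not replace this. Rotating a contact segment gives contact sets of \emph{different} supporting affine functions. Their union is not a set on which $u$ is affine, so no volume contradiction follows.
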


\begin{thm}\label{polytope}
    For $k = 1$ or $2$ and $0<\alpha<1$, there exists a convex function $u\in C^\infty(\mathbb{R}^n\setminus P[n-k])$ that solves 
    $$\det(D^2u) = 1+f\mathcal{H}^{n-k}|_{P[n-k]},$$
     where $f\in L^{\infty}(\partial P)$, $f \geq c>0$ on $ P[n-k]\cap \Lambda_\alpha$ and $c$ depends on $a$ and $n$.
\end{thm}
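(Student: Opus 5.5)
We would build $u$ by the obstacle/Perron construction of \cite{MR1}, now with an obstacle carried by $P[n-k]$. Fix a large ball $B_R \supset P$. Take a convex function $w$ that vanishes on $\partial P$ and grows quadratically away from it, e.g. $w(z) = \mathrm{dist}(z,\partial P)^2$ modified to be convex; a simpler choice is to impose the constraint only on $\Lambda_\alpha$. Let $u$ be the largest convex function with $\det D^2 u \ge 1$ on $B_R$ (in the Alexandrov sense), boundary data $u = \frac{|z|^2}{2} + C$ on $\partial B_R$, and $u \le 0$ on $\partial P\cap \overline{B_{a}}$. By Perron's method and comparison, $u$ satisfies $\det D^2u = 1$ in the open set where the constraint is inactive, including $B_R\setminus \partial P$. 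Hence $\mu:=\det D^2 u - 1$ is a nonnegative measure supported on the contact set $\{u=0\}\cap\partial P$. Choosing $C$ large and using barrier comparison, the contact set contains $\Lambda_\alpha$. This parallels what the paper does for $S_a$ in Theorem~\ref{line}, and we would reuse that argument facet by facet.

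The key structural step is to show that $\mu$ charges only $P[n-k]$ and is absolutely continuous there with respect to $\mathcal{H}^{n-k}$ with bounded density. On the relative interior of a facet $F$, the contact set is a piece of a hyperplane, so $u$ has a "crease" along $F$. The subdifferential is a segment in the normal direction, which has zero $n$-dimensional measure per point. Computing $\partial u(E)$ for $E\subset F$ then gives $|\partial u(E)| \lesssim \mathcal{H}^{n-1}(E)\cdot(\text{jump length})$. This would produce a density w.r.t. $\mathcal{H}^{n-1}$, which matches the case $k=1$ ($P[n-1]=\partial P$). For $k=2$ we must additionally show that $\mu$ puts no mass on facet interiors. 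To do that, we would compare with the one-dimensional profile from Theorem~\ref{line}-type barriers and argue that $u$ is $C^1$ across the relative interiors of facets, since a crease would contradict maximality: one can push $u$ up near the facet interior while keeping $\det\ge 1$. Along the $(n-2)$-skeleton, subdifferentials are 2-dimensional polygons, giving the $\mathcal{H}^{n-2}$ density. The $L^\infty$ bound on $f$ comes from Lipschitz bounds on $u$ via boundary comparison. The lower bound $f\ge c$ on $P[n-k]\cap\Lambda_\alpha$ comes from comparing from above with an explicit subsolution pinned at $0$ on the skeleton. That comparison forces $\partial u(z)$ to contain a $k$-dimensional disc of radius $\gtrsim c(a,n)$.

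Smoothness away from $P[n-k]$ follows from Caffarelli's interior regularity once we know $u$ is strictly convex there. Strict convexity is the main obstacle. A line segment in the graph of $u$ must reach the boundary or the singular set. Caffarelli's theorem gives that such segments have endpoints where $\det D^2u$ is singular, and that the singular set is at least of dimension $>n/2$-type. The restriction $k=1,2$, i.e. codimension $\le 2$, is exactly what rules out such segments. A segment through a point off $P[n-k]$ would need extremal points in a set too small relative to the required dimension, and lower-dimensional singular sets of codimension $\le 2$ are excluded by the Alexandrov/Caffarelli dimension count. Having ruled out segments, $u$ is strictly convex, hence $C^{1,\beta}$ and then $C^\infty$ in $\mathbb{R}^n\setminus P[n-k]$ by Caffarelli and Schauder theory; outside $B_R$ we extend by the standard exterior construction, as for Theorem~\ref{line}. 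I expect the hardest step to be the $k=2$ claim that facet interiors carry no mass, together with this strict convexity argument.
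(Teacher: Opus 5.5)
Your construction breaks at the choice of obstacle. You impose the \emph{constant} constraint $u\le 0$ on $\partial P\cap\overline{B_a}$. Take any piece $E=G\cap\overline{B_a}$ of an $(n-k)$-face $G$. Since $u|_E$ is convex and $\le 0$, one of two things happens:
\begin{itemize}
\item $u<0$ on $\mathrm{relint}\,E$. Then $u<g$ in a neighbourhood of $\mathrm{relint}\,E$, so $\mathcal{M}u=dx$ there.
\item $u$ attains its maximum $0$ at a relative interior point. Then $u\equiv 0$ on $E$, so every $p\in\partial u(z)$ with $z\in\mathrm{relint}\,E$ has $\langle p,v\rangle=0$ for all $v$ tangent to $G$. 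Hence $\partial u(\mathrm{relint}\,E)$ lies in the $k$-dimensional normal space and $\mathcal{M}u(\mathrm{relint}\,E)=0$.
\end{itemize}
In both cases the faces carry no $\mathcal{H}^{n-k}$ mass, so $f\ge c$ on $\Lambda_\alpha$ fails. The subsolution ``pinned at $0$ on the skeleton'' that you propose for the lower bound only forces the second case. Your estimate $|\partial u(E)|\lesssim\mathcal{H}^{n-1}(E)\cdot(\text{jump})$ silently assumes the tangential gradient sweeps out an $(n-k)$-dimensional set. That requires an obstacle which is strictly convex along the skeleton.

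This is why the paper takes $g=g_{n,\alpha}(\epsilon\,\cdot)|_{P[n-k]}$, which has three properties:
\begin{itemize}
\item it is the quadratic $|z|^2/2$ near the origin, so at contact points $\langle p,v\rangle=\langle\nabla g,v\rangle$ for tangent $v$;
\item its gradient blows up at the edge of its support, so the free boundary stays inside and no extra mass concentrates there;
\item it is $+\infty$ off $P[n-k]$.
\end{itemize}
The last property also settles your $k=2$ worry: Lemma \ref{obstacle_problem} gives $\det D^2u=1$ off $P[n-k]$ directly. Your ``push $u$ up near a facet'' argument is invalid wherever the constraint on that facet is active. The lower bound on $f$ then comes from the barrier $\max\{|(x,y)|^2/2+|x|/16+L,\;W_n-W_n(\epsilon\mathbf{e})\}$ in $B_\rho$, extended by $W_n-W_n(\epsilon\mathbf{e})$. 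This barrier lies in $\mathcal{F}$ and touches $g$ along a face $F\subset\{x=0\}$.

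The strict convexity step is also missing its key idea. Proposition \ref{no_extremal} forbids extremal points of $\{u=L\}$ only where $\det D^2u$ is bounded. It does not rule out segments with endpoints on $P[n-k]$, and such segments genuinely occur for $k\ge 3$ (Section \ref{interaction_section}). So a generic ``dimension count'' cannot explain why $k\le 2$ works.

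The paper's argument runs as follows. First let $R\to\infty$ using the bounds $W_n-W_n(\epsilon\mathbf{e})\le u_R\le W_n+10$, rather than extending a solution from a fixed ball. Then $u$ is smooth outside $P$ by Theorem \ref{CL3_exterior}. A segment $\{u=L\}$ meeting $P\setminus P[n-k]$ must therefore end at some $p\in P[n-k]$ with $u(p)=g(p)$; otherwise it keeps propagating and exits $P$. At such a contact point $p$, the section $S_h(p)$ contains three things:
\begin{itemize}
\item $B_{\delta\sqrt h}(p)\cap P[n-k]$, because $u\le g$ with $g$ smooth along the face;
\item the segment, of length $l$;
\item width $\omega(h)$, with $h/\omega(h)\to 0$, in the remaining $k-1$ directions.
\end{itemize}
Hence $|S_h(p)|\gtrsim h^{(n-k)/2}\,l\,\omega(h)^{k-1}$. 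This exceeds the bound $|S_h(p)|\lesssim h^{n/2}$, valid for $\det D^2u\ge 1$, precisely when $k\le 2$. Your proposal needs this section-volume comparison at contact points in place of the dimension count.
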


\begin{thm}\label{cross}
    Let dimension $n=2,3$. Let $L_1,L_2,\cdots, L_k$ be line segments starting from the origin. We can construct a solution
    $$\det(D^2u) = 1+c_0\delta_0+f\sum_{i=1}^k\mathcal{H}^1|_{L_i}$$ 
    such that $u\in C^{\infty}(\mathbb{R}^n\setminus \{L_1,\cdots,L_k\})$ and $f\geq c_1$ on $\cup_{i=1}^k \alpha L_i$, for any $0<\alpha<1$ and for some positive constants $c_0,c_1$ depending on lengths of line segments and dimension.
\end{thm}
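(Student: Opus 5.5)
We build $u$ by Perron's method for an obstacle problem whose obstacle lives on the star $\Sigma:=\bigcup_{i=1}^k L_i$, in the same way as for Theorem \ref{line}.

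\textbf{Setup.} Fix a large ball $B_R \supset \Sigma$ and a cone-type obstacle $\phi$. A natural choice is
\[
\phi(z)=\lambda\,\operatorname{dist}(z,\Sigma)\quad\text{or}\quad \phi(z)=\max_i \lambda\,\operatorname{dist}(z,L_i)-M
\]
for suitable constants. Let $u$ be the supremum of all convex $v$ on $B_R$ such that
\[
\det D^2 v\ge 1 \text{ on } B_R\setminus\Sigma,\qquad v\le \phi \text{ on } \Sigma,\qquad v\le g \text{ on } \partial B_R,
\]
where $g$ is quadratic boundary data, e.g.\ $\tfrac12|z|^2$ plus a large constant.

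\textbf{Properties of $u$.}
\begin{itemize}
\item Standard Perron arguments give $\det D^2u=1$ on $B_R\setminus\Sigma$. One then extends $u$ to $\mathbb{R}^n$, as done in the earlier theorems.
\item The Monge--Amp\`ere measure of $u$ on $\Sigma$ is nonnegative. Since $\mathcal{H}^1(\Sigma)<\infty$, the measure $\mu|_\Sigma$ decomposes into an atom at $0$ and a part on $\Sigma\setminus\{0\}$.
\item Away from $0$, each segment $L_i$ is locally a segment. The local analysis from Theorem \ref{line} with $k=1$, applied to a neighborhood of an interior point of $L_i$, gives absolute continuity there: $\mu=f\,\mathcal{H}^1$ with $f\in L^\infty$ and $f\ge c_1$ on $\alpha L_i$. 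The lower bound comes from comparing $u$ with a barrier which is strictly below it off $L_i$ and touches it on $L_i$. This forces $\partial u(z)$ to contain a $(n-1)$-dimensional disc of radius $\gtrsim \operatorname{dist}$-slope for $z\in\alpha L_i$. The disc sits in the normal directions to $L_i$, and its size is bounded below because $u$ has a definite "kink" across $L_i$ inherited from $\phi$.
\item At the origin, several kinks in different directions meet. The subgradient image $\partial u(0)$ contains the convex hull of the normal discs from the different $L_i$. When the $L_i$ are not all collinear, this hull has positive $n$-dimensional volume, which gives $c_0>0$. If all the $L_i$ are collinear, the point mass is allowed to be degenerate (or $L_1\cup L_2$ is just a segment, and Theorem \ref{line} applies).
\end{itemize}

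\textbf{Smoothness off $\Sigma$.} This is the step I expect to be the main obstacle, and it is why we need $n=2,3$. By Caffarelli's theory, a singularity of a solution with $\det D^2 u=1$ must lie on a line segment or convex set whose extremal points lie in the singular support $\Sigma$. To exclude singularities in $B_R\setminus\Sigma$ we must show that $u$ agrees with no affine function along a segment leaving $\Sigma$.

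Caffarelli's dimension bound says such sets have dimension $<n/2$, and the low-codimension result used for Theorem \ref{polytope} requires codimension of $\Sigma$ at most $2$. Here $\operatorname{codim}\Sigma=n-1\le 2$ exactly when $n\le3$.

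The plan is as follows.
\begin{enumerate}
\item Suppose $u$ coincides with an affine $\ell$ on a segment $I$ ending on $\Sigma$.
\item Use the strict convexity of $\phi$ in directions transverse to $L_i$, together with Alexandrov's estimate. In a section of $u-\ell$ around $I$, the Monge--Amp\`ere mass is bounded below by $|S|$. The section is at least $(n-1)$-dimensionally thin only along $\Sigma$, and the codimension count yields a contradiction when $n-1\le 2$.
\item The junction point $0$ needs separate care. There we argue directly that any segment through $0$ lies in the cone region where the barrier $\phi$ strictly separates.
\end{enumerate}

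Once strict convexity of $u$ off $\Sigma$ is established, interior $C^\infty$ regularity follows from Caffarelli's regularity theory. The resulting $u$ solves
\[
\det D^2 u = 1 + c_0\delta_0 + f\sum_{i=1}^k \mathcal{H}^1|_{L_i},
\]
with the claimed lower bound $f\ge c_1$ on $\bigcup_i \alpha L_i$.
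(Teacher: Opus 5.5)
There is a genuine gap at the step that is supposed to produce the $\mathcal{H}^1$ part of the measure: the obstacle you propose cannot produce it. Only $\phi|_\Sigma$ enters the problem, and for both of your choices this restriction is affine along the segments. Indeed, $\lambda\operatorname{dist}(\cdot,\Sigma)\equiv 0$ on $\Sigma$. And $\max_i\lambda\operatorname{dist}(\cdot,L_i)-M$ equals $\lambda c_j|z|-M$ on (at least an initial portion of) $L_j$, and is constant if $k=1$.

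On an open piece $I\subset L_j$ where the obstacle is affine, $\mathcal{M}u(I)=0$. There are two cases.
\begin{itemize}
\item Where $u<\phi$, one has $\mathcal{M}u=dx$ by Lemma \ref{obstacle_problem}.
\item If $u=\phi$ at one point of $I$, then $(u-\phi)|_I$ is convex, $\le 0$, and has an interior maximum, so $u=\phi$ on all of $I$. Then every $p\in\partial u(z)$ with $z\in I$ has $p\cdot d_j$ equal to the slope of $\phi$ along $I$. So $\partial u(I)$ lies in a hyperplane and has measure zero.
\end{itemize}
Hence $f\ge c_1$ on $\alpha L_j$ fails, and any singular mass is pushed to the endpoints of the affine pieces (atoms, e.g.\ at the tips).

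This also shows what is wrong with your heuristic. The transverse kink of $\phi$ across $L_i$ is invisible to the problem. Having an $(n-1)$-disc normal to $L_j$ in $\partial u(z)$ for each $z$ gives positive $n$-dimensional measure only if these discs move in the $d_j$-direction as $z$ moves along $L_j$. That happens only if $u|_{L_j}$, and hence the obstacle on the contact set, is strictly convex along $L_j$.

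The paper's obstacle is $g=\bigl(g_{n,\alpha}+\sum_i\operatorname{dist}(\cdot,\langle d_i\rangle)/8k\bigr)(\epsilon\,\cdot)$ restricted to $\bigcup_i L_i$. It has three features:
\begin{itemize}
\item it is quadratic along each segment;
\item its slope blows up at the tips, so contact stays away from them and no atom forms at the relative boundary of $\{u<g\}$;
\item it has a conical term producing the kink at $0$ (your second choice had the right instinct there).
\end{itemize}
The key step is then an explicit admissible lower barrier, $\Phi=\tfrac12|z|^2+\sum_i\operatorname{dist}(z,\langle d_i\rangle)/8k$. It is glued as $\max\{\Phi,D\}$ in $B_\rho$, with $D=W_n-W_n(\epsilon\mathbf{e})$ outside and $\epsilon\ll\rho\ll 1$. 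This barrier touches $g$ on $\bigcup_i\alpha L_i$, so there $\partial u_R(z)\supseteq\partial\Phi(z)$. The set $\partial\Phi(z)$ is the translate by $z$ of a fixed set containing an $(n-1)$-disc of radius $1/8k$ in $d_j^\perp$, which gives $f\ge c_1$. At $0$, the Minkowski sum of the discs $\partial\operatorname{dist}(\cdot,\langle d_i\rangle)(0)/8k$ gives $c_0>0$ (for non-collinear $d_i$, as you note).

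Your regularity outline is essentially the paper's: extremal points of $\{u=\ell\}$ lie on $\Sigma$ with $u=g$ there, followed by a section-volume count against $|S_h|\lesssim h^{n/2}$. However, the input used there is the $C^{1,1}$ upper bound of $g$ along $\Sigma$, which gives $B_{\delta\sqrt h}\cap\Sigma\subset S_h(p)$. It is not strict convexity of $\phi$ transverse to $L_i$.
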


\begin{thm}\label{smooth_boundary}
    Let $\Omega$ be an open convex set with $C^1$ boundary. There exists a convex function, $u\in C^\infty(\mathbb{R}^n\setminus \partial \Omega)$, and a non-negative function $f \in L^{\infty}(\partial \Omega)$, that solves,
    $$\det(D^2u) = 1+f\mathcal{H}^{n-1}|_{\partial \Omega}$$
    such that $f \geq c_{\Omega}$ on $\partial \Omega$ for some positive constant $c_\Omega$ depending on $\Omega$ and dimension $n$.
\end{thm}

\begin{rem}
    In the theorems above, the solutions we construct are asymptotic to some quadratic polynomial at infinity. Fixing the quadratic polynomial gives us a unique solution. This follows from Theorem \ref{CL3_exterior}.
\end{rem}

The reason we work with $k=1 \text{ and }2$ in Theorem \ref{polytope}  and $n=2\text{ and }3$ in Theorem \ref{cross} is to avoid the development of singularities of the solutions away from the support of the singular measures i.e. to avoid interaction between singular measures. A natural question to ask is whether these singular measures interact if we further relax the assumption to $k\geq 3$ in Theorem \ref{polytope}. The answer to this question depends crucially on the precise geometry of the support of the singular measures. For technical reasons, we delay stating a general result to Section \ref{interaction_section}. For now, we state an abridged version of the discussion in the form of a theorem: 

\begin{thm}\label{simple_interatcion}
    Let $P_n$ be a regular convex polytope in $\mathbb{R}^n$. For $k\geq 3$ and $\alpha \in (0,1)$, there exists a convex function $u\in C^{\infty}(\mathbb{R}^n\setminus (P[n-k]\cup \Sigma))$ that solves,
    $$\det(D^2u) = 1+f\mathcal{H}^{n-k}|_{P[n-k]}$$
    where $f\in L^{\infty}(\partial P)$, $f \geq c>0$ on $ P[n-k]\cap \Lambda_\alpha$ and $c$ depends on $a$ and $n$. The singular set $\Sigma$ away from $P[n-k]$ is non-empty and its structure can be determined. Moreover the dimension of $\Sigma$ is at least $n-k+1$. 
\end{thm}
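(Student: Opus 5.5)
We construct $u$ by the same Perron/obstacle scheme used for Theorem \ref{polytope}. We then identify the singular set $\Sigma$ by exploiting the symmetry of the regular polytope $P_n$.

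\emph{Construction.} Fix the quadratic $q(x)=\tfrac12|x|^2$ and consider the class of convex functions $w$ with $\det D^2 w\ge 1$ in the Alexandrov sense, $w\le q$ near infinity (asymptotically), and $w\le 0$ on $P[n-k]\cap \overline{B_{a}}$. Here the obstacle is placed only on the skeleton, as in the construction of Theorem \ref{polytope}. Let $u$ be the supremum of this class. By the standard Perron argument, $\det D^2u=1$ in $\mathbb{R}^n\setminus P[n-k]$. The measure of $\partial u$ over $P[n-k]$ is absolutely continuous with respect to $\mathcal H^{n-k}$, with density $f\in L^\infty$. The lower bound $f\ge c$ on $P[n-k]\cap\Lambda_\alpha$ comes from comparison with the explicit barriers used for Theorem \ref{line}. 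Near a point of the interior of an $(n-k)$-face, $P[n-k]$ looks locally like $S_a$, so the barrier from Theorem \ref{line}, rotated and translated, gives a subsolution touching the obstacle. This forces $\partial u$ at such points to contain a $k$-dimensional set of gradients of size bounded below. The uniqueness given the asymptotic quadratic follows from Theorem \ref{CL3_exterior}. By uniqueness, $u$ inherits the full symmetry group of $P_n$.

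\emph{Singularities and $\Sigma$.} Away from $P[n-k]$, Caffarelli's theory says $u$ is smooth wherever it is strictly convex. Any non-strict convexity is a line segment (or higher-dimensional face) of the graph of $u$ over which $u$ is affine, with endpoints on the support of the singular measure. When $k\ge 3$, the codimension of $P[n-k]$ exceeds $2$, so the Caffarelli-type dimension bound no longer forbids such segments: an affine set can have dimension up to $k-1>n/2$ relative to the codimension.

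The plan is to show that segments must appear joining distinct $(n-k)$-faces that lie in a common $(n-k+1)$-face $F$. Suppose instead that $u$ were strictly convex in the relative interior of $F$ away from $P[n-k]$. Restrict $u$ to the $(n-k+1)$-plane of $F$; by symmetry this plane is invariant under the stabilizer of $F$. Comparison with the one-dimensional profile transverse to the skeleton should then contradict the density lower bound $f\ge c$. Heuristically, the gradient jumps of size $\gtrsim c$ along neighbouring $(n-k)$-faces force $u$ to agree with the supporting affine function on the convex hull of those faces. This is analogous to the $n/2$ phenomenon of \cite{MR1}, with point masses replaced by faces.

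Concretely, I would find, using the symmetry, an affine function $\ell$ supporting $u$ along two adjacent $(n-k)$-faces simultaneously. Convexity then gives $u=\ell$ on their convex hull, a set of dimension $\ge n-k+1$ lying in $F$ off the skeleton. So $\Sigma\supset$ the relative interiors of these hulls, which shows $\Sigma\neq\emptyset$ with $\dim\Sigma\ge n-k+1$. Its structure is the union of these faces of the contact set, and they are determined by the symmetry.

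\emph{Main obstacle.} The hard step is producing this common supporting affine function, i.e. showing the contact set of $u$ with $\ell$ is not just the skeleton itself. I would first try a symmetrization argument combined with an Alexandrov-type volume count. If $u$ were strictly convex across $F$, the image $\partial u(F\setminus P[n-k])$ would have to accommodate gradient mass of order $c\,\mathcal H^{n-k}$ along the faces. For $k\ge3$ this is incompatible with $\det D^2u=1$ on a thin neighbourhood, which yields the contradiction.
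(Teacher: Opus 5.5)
There is a genuine gap, and it sits at the step you call the main obstacle: nothing in your argument forces $u$ to lose strict convexity off $P[n-k]$.

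\emph{The forcing mechanism.} Your volume count runs in the wrong direction. Section-volume bounds coming from $\det D^2u\ge 1$ are what the paper uses to \emph{exclude} flat pieces when $k=1,2$ (proof of Theorem \ref{polytope}). For $k\ge 3$ they are merely inconclusive and cannot contradict strict convexity. Nor does $\partial u(F\setminus P[n-k])$ have to ``accommodate'' the gradient mass on the faces. That mass is $|\partial u(P[n-k])|$, a separate set of slopes, while $|\partial u(F\setminus P[n-k])|=|F\setminus P[n-k]|=0$ regardless. The same picture (density $f\ge c$ on the skeleton) also holds for $k=1,2$, where $u$ is smooth off the skeleton. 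So no argument built only from these ingredients can detect the threshold $k\ge3$. Restricting $u$ to the plane of $F$ does not help either: the restriction satisfies no Monge--Amp\`ere equation, so there is nothing to compare it with.

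\emph{The target structure.} A single affine $\ell$ with $u=\ell$ on the convex hull of two $(n-k)$-faces would make $u$ affine on each face. Since $u\le g$ and $g$ is strictly convex along faces, $u$ could then touch $g$ in at most one point per face. That leaves no room for an $f\,\mathcal{H}^{n-k}$ part with $f\ge c>0$ there. For $n=4$, $k=3$ it would also be a two-dimensional flat piece, which Example \ref{square_4d} notes is impossible. Separately, your constant obstacle $w\le 0$ is not the paper's and does not work. By convexity, $u$ can touch a constant obstacle on a face only on the whole face (an $(n-k)$-dimensional flat piece, excluded when $n-k\ge n/2$) or on its relative boundary. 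This is why the paper uses the strictly convex $g_{n,\alpha}$.

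\emph{The missing idea} is a subsolution that is itself affine along segments transverse to the faces. In coordinates $(x,y,z)\in\mathbb{R}^{n-k}\times\mathbb{R}^{k-1}\times\mathbb{R}$ the paper takes $\varphi=|x|^2/2+w_{1,k}(y,z)$, adjusted by a linear term $L$ and by translations. For the regular polytope it uses the variants $\varphi_{k_2,m,\delta}$. Here $w_{1,k}$ is the Pogorelov-type function with $\det D^2w_{1,k}\ge1$ on a slab. It exists precisely because $1<k/2$, and this is where $k\ge3$ enters. The argument then runs as follows.
\begin{enumerate}
\item Such a $\varphi$ lies in the Perron class, so $u\ge\varphi$.
\item On $\{y=0\}$, $\varphi$ does not depend on $z$. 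It is arranged to touch $g$ at the two endpoints of a $z$-segment lying on distinct $(n-k)$-faces, so $u=g=\varphi$ at those endpoints.
\item Convexity of $u$ then gives $u\le\varphi$ along the segment. Hence $u=\varphi$ there, and $u$ coincides with a supporting affine function of $\varphi$ on the whole segment.
\item Moving the barrier, by translations or by the weights $\delta_i$, sweeps out an $(n-k)$-parameter family of such segments, each with its own affine function. The symmetry of $P_n$ is what gives the barrier enough freedom to touch at the right points.
\end{enumerate}
This family of segments, not a single flat face, is what gives $\dim\Sigma\ge n-k+1$.
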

For a proof of this theorem see the discussion following Theorem \ref{vol_obs}.\\

Next, we compare our results in this article with \cite{M1}, and \cite{MR1}. 
%Say solving an obstacle problem is core to for all these works.
In contrast to \cite{M1}, which focuses on dimensions $n=3,4$, and \cite{MR1}, where singular sets of dimension at most $n/2$ are constructed, the results in this article apply in arbitrary dimensions and yield higher-dimensional singularities. This is made possible because we allow our obstacles to live on low codimensional subsets which forces singularity of solutions on these sets. This however makes the study of regularity away from those sets more involved. Additionally, to ensure the Monge–Ampère measures associated with the solutions to our obstacle problem don't develop Dirac singularities, we have to construct our obstacles carefully.  \\

%Write a line saying one can study the singularity like the paper of Liu(Open question).

The paper is organized as follows: In Section \ref{priliminaries}, we provide some basic definitions regarding solutions to Monge–Ampère equations, solve an obstacle problem for Monge–Ampère equations with a general obstacle, and  construct some barriers that we use later to prove the theorems above. 
In Section \ref{examples}, we motivate the connection between optimal transport and the type of singular measures we construct. In particular, we give several examples of Monge–Ampère measure with singularities arising in optimal transport.  Moreover, we also motivate a new connection between optimal transport and Signorini problem. In Section \ref{proof}, we prove Theorems \ref{line}, \ref{polytope}, \ref{cross}, and \ref{smooth_boundary}. Section \ref{interaction_section} is dedicated towards interaction of singular measures. Finally, in Section \ref{open}, we discuss some open problems motivated by our work.

\section{Preliminaries}\label{priliminaries}

In this section we define Alexandrov solution to  Monge–Ampère equation, define sections for convex functions, recall a few theorems from the literature, solve an obstacle problem related to Monge–Ampère equation,  and construct suitable barriers that will be used to prove the theorems above.

\subsection{Basics of Monge–Ampère equation}
To a convex function $v$ on a domain $\Omega \subset \mathbb{R}^n$ we associate a Borel measure $\mathcal{M}v$ on $\Omega$, called the Monge-Amp\`{e}re measure of $v$. It satisfies
$$\mathcal{M}v(E) = |\partial v(E)|$$
for any Borel set $E \subset \Omega$, where $\partial v$ denotes the subgradient of $v$. When $v \in C^2$, we have $\mathcal{M}v = \det D^2v \,dx.$
Given a Borel measure $\mu$ on $\Omega$, we say that $v$ is an Alexandrov solution to the Monge-Amp\`{e}re equation $\det D^2v = \mu$
if  $\mathcal{M}v = \mu.$ In this article, when we say $u$ satisfies a  Monge–Ampère equation, we mean it satisfies the equation in Alexandrov sense.\\

    Let $u:\mathbb{R}^n \to \mathbb{R}$ be a convex function. Given $x\in \mathbb{R}^n$, $p\in \partial u(x)$ and $h>0$, define the section of height $h$ at $x$ by
\[
S_h(x,p):=\left\{y\in \mathbb{R}^n:\ u(y)<u(x)+\langle p,y-x\rangle+h\right\}.
\]
We use this definition in the proof of Theorem \ref{polytope}. Next we discuss two results that we use in Section \ref{proof}. The first Theorem is due to Caffarelli:
\begin{prop}[Caffarelli, \cite{C0}]\label{no_extremal}
Let $u$ be a convex Alexandrov solution of
\[
0<\lambda\ \leq \det(D^2 u) \leq \Lambda<\infty
\]
in a domain $\Omega \subset \mathbb{R}^n$. Let $L$ be a supporting affine function to $u$, and consider the contact set
\[
\Sigma := \{x \in \Omega : u(x) = L(x)\}.
\]
Then $\Sigma$ has no extremal points in $\Omega$. 
\end{prop}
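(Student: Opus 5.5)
We argue by contradiction. Suppose $x_0 \in \Sigma$ is an extremal point of the convex set $\Sigma$ and $x_0\in\Omega$. Subtracting $L$ changes neither the Monge–Ampère measure nor the contact set, so we may take $L\equiv 0$. Then $u\ge 0$ in $\Omega$ and $\Sigma=\{u=0\}$. Since $\Sigma$ is convex and $x_0$ is extremal, there is a supporting hyperplane isolating $x_0$ from most of $\Sigma$. More concretely, after an affine change of coordinates there is an affine function $\ell$ with $\ell(x_0)>0$ and $\ell<0$ on $\Sigma\setminus B_\delta(x_0)$ for small $\delta$. (If $x_0$ is merely extremal and not exposed, we first approximate it by exposed points, using Straszewicz's theorem, which are still interior to $\Omega$.) For small $\varepsilon>0$ we then consider the section
$$
S_\varepsilon=\{x: u(x)<\varepsilon\,\ell(x)\}.
$$
By the choice of $\ell$ and the positivity of $u$ on $\partial B_\delta(x_0)\cap\{\ell\ge0\}$ off $\Sigma$, this section is a nonempty convex set compactly contained in $B_\delta(x_0)\subset\Omega$ for $\varepsilon$ small. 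Its boundary data are $u=\varepsilon\ell$.

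The key step is comparing the volume of $S_\varepsilon$ with the height of $u-\varepsilon\ell$ inside it. Normalize $S_\varepsilon$ by John's lemma: take an affine map $T_\varepsilon$ with $B_1\subset T_\varepsilon(S_\varepsilon)\subset B_n$. Then set
$$
v_\varepsilon(z)=|\det T_\varepsilon|^{2/n}\,(u-\varepsilon\ell)(T_\varepsilon^{-1}z).
$$
This function has $\lambda\le\det D^2 v_\varepsilon\le\Lambda$ on a normalized domain and vanishes on the boundary. The Alexandrov estimate and the lower bound on the measure give $|\min v_\varepsilon|\sim 1$, so $|\min_{S_\varepsilon}(u-\varepsilon\ell)|\sim |S_\varepsilon|^{2/n}$. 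The same estimate holds at every point at comparable distance from $\partial S_\varepsilon$ in normalized coordinates.

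Now the contradiction. The point $x_0$ lies in $S_\varepsilon$ with $(u-\varepsilon\ell)(x_0)=-\varepsilon\ell(x_0)$, and $\ell(x_0)$ is comparable to the maximum of $\ell$ on $S_\varepsilon$. Meanwhile $\Sigma\cap S_\varepsilon$ is the portion of $\Sigma$ cut off near the tip $x_0$. As $\varepsilon\to0$ this portion shrinks to $x_0$, while $S_\varepsilon$ must stay much wider in the directions along which $u$ grows slowly. The point is to show that $x_0$ sits near the boundary of $S_\varepsilon$ in normalized coordinates, namely in the direction in which $\Sigma$ ends. Equivalently, the normalized solutions $v_\varepsilon$ converge along a subsequence, by compactness of normalized solutions with $\lambda\le\det\le\Lambda$, to a limit $v$ on a normalized convex domain. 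For this limit, the image of $\Sigma$ becomes a segment or set touching $\partial(T_\varepsilon S_\varepsilon)$, on which $v$ is affine. So $v$ would be affine along a segment reaching the boundary and would have a strictly interior minimum, and the Alexandrov estimate shows these two facts cannot both hold with $\det D^2v\ge\lambda$.

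The main obstacle is making this blow-up quantitative. One must show that the normalized images of $\Sigma\cap S_\varepsilon$ have length bounded below while lying in a set where $v_\varepsilon\approx$ linear with slope $\to0$. I would handle this with the Alexandrov estimate $|v(z)|^n\le C\,\mathrm{dist}(z,\partial)\,\mathrm{diam}^{n-1}\,\mathcal{M}v$, applied at the image of $x_0$. Since $x_0$ is the extremal tip, its distance to the boundary of the section in the direction of $\ell$ is $o(1)$ relative to the width. This forces $|v_\varepsilon(T_\varepsilon x_0)|\to0$. On the other hand, the convexity of $v_\varepsilon$ together with its vanishing on $T_\varepsilon\Sigma$ (after the correction by $\varepsilon\ell$) keeps $|v_\varepsilon(T_\varepsilon x_0)|$ comparable to $|\min v_\varepsilon|\sim1$. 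This is the contradiction.
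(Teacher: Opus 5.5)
Your strategy is Caffarelli's original argument from \cite{C0}; the paper gives no proof beyond that citation. The chain in your last paragraph is the right core, but its key step is unjustified, and as stated it cannot be justified.

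\textbf{The gap.} You assert that $T_\varepsilon x_0$ is at distance $o(1)$ from $\partial(T_\varepsilon S_\varepsilon)$ ``relative to the width''. Nothing you set up bounds the width of $S_\varepsilon$ in the direction $\nabla\ell$ from below. In fact the proposition as quoted is false when $\Sigma$ is a single point: take $u=|x|^2/2$, $L=0$, $\Sigma=\{0\}$. Caffarelli's theorem says that $\Sigma$ is either a point or has no extremal points in $\Omega$. In this example $S_\varepsilon$ is a ball of radius about $\sqrt{2\varepsilon t}$ in which $x_0$ is nearly central, so your step fails exactly there.

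You need to assume $\Sigma\neq\{x_0\}$ and choose $\ell$ more carefully. Your condition ``$\ell(x_0)>0$, $\ell<0$ on $\Sigma\setminus B_\delta(x_0)$'' is not enough. Instead, let $e$ expose $x_0$, meaning $\langle e,x-x_0\rangle<0$ on $\Sigma\setminus\{x_0\}$. Set $\ell(x)=t+\langle e,x-x_0\rangle$, with $t>0$ so small that the cap $\Sigma\cap\{\ell\ge 0\}$ is compact in $\Omega$. Then:
\begin{itemize}
\item Since $\Sigma$ contains a segment issuing from $x_0$, it meets $\{\ell=0\}$. Those points lie in $\partial S_\varepsilon$, because the segment up to them lies in $S_\varepsilon$ and there $u=0=\varepsilon\ell$.
\item Since $u\ge 0$, we have $S_\varepsilon\subset\{\ell>0\}$.
\item Hence $S_\varepsilon$ lies in the slab $\{0<\ell<M_\varepsilon\}$, with $M_\varepsilon:=\sup_{S_\varepsilon}\ell$, and its closure touches both faces.
\item Finally, $M_\varepsilon\to t=\ell(x_0)$. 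This is because the $\overline{S_\varepsilon}$ are nested and decrease to the cap, on which $\max\ell=\ell(x_0)$.
\end{itemize}

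\textbf{How the proof then closes.} This should replace your blow-up paragraph.
\begin{itemize}
\item Affine maps preserve ratios of distances within a slab, and $T_\varepsilon S_\varepsilon$ touches both faces of the image slab. Hence $\mathrm{dist}(T_\varepsilon x_0,\partial T_\varepsilon S_\varepsilon)\le 2n\,(M_\varepsilon-t)/M_\varepsilon\to 0$.
\item Alexandrov's estimate then gives $|v_\varepsilon(T_\varepsilon x_0)|^n\le C(n)\Lambda\,\mathrm{dist}(T_\varepsilon x_0,\partial T_\varepsilon S_\varepsilon)\to 0$.
\item The competing lower bound comes from $u\ge 0$, not from convexity or from the values on $T_\varepsilon\Sigma$. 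On $S_\varepsilon$ one has $u-\varepsilon\ell\ge -\varepsilon M_\varepsilon$, while $(u-\varepsilon\ell)(x_0)=-\varepsilon t$. Hence $|v_\varepsilon(T_\varepsilon x_0)|\ge (t/M_\varepsilon)\,|\min v_\varepsilon|$.
\item Comparison with $\tfrac{\lambda^{1/n}}{2}(|z|^2-1)$ on $B_1$ gives $|\min v_\varepsilon|\ge \lambda^{1/n}/2$, so $|v_\varepsilon(T_\varepsilon x_0)|\ge (t/M_\varepsilon)\lambda^{1/n}/2$, contradicting the previous bound.
\end{itemize}

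\textbf{Two misstatements in your blow-up paragraph.}
\begin{itemize}
\item The contradiction you name there is not the operative one. The image of $\Sigma\cap\overline{S_\varepsilon}$ always reaches $\partial(T_\varepsilon S_\varepsilon)$ along $\{\ell=0\}$, so ``affine on a segment reaching the boundary'' carries no information. What is contradictory is that $T_\varepsilon x_0$, where $v_\varepsilon\le -c$, approaches the boundary, where $v_\varepsilon$ vanishes with a uniform modulus.
\item ``Slope $\to 0$'' is false. Along $T_\varepsilon(\Sigma\cap S_\varepsilon)$, $v_\varepsilon$ drops from $0$ to about $\min v_\varepsilon$, which is of order one, over a length at most $2n$.
\end{itemize}
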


\noindent 
The second theorem is due to Caffarelli and Li:

\begin{thm}[Caffarelli--Li, {\cite[Corollary 1.3]{CL3}}]\label{CL3_exterior}
Let $O \subset \mathbb{R}^n$ be a bounded open convex set, and let $u$ be a locally convex Alexandrov solution of
\[
\det(D^2 u)=1 \quad \text{in } \mathbb{R}^n \setminus O.
\]
Then
\[
u \in C^\infty(\mathbb{R}^n \setminus \overline{O}).
\]
\end{thm}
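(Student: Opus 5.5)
The statement is local: it suffices to show that $u$ is smooth near every point $x_0\in\mathbb{R}^n\setminus\overline{O}$. The plan is to reduce to Caffarelli's interior regularity theory. An Alexandrov solution of $\det D^2u=1$ that is \emph{strictly convex} in a neighborhood of $x_0$ is $C^{1,\beta}$ there. Caffarelli's $C^{2,\beta}$ estimates for constant (hence smooth) right-hand side then give $C^{2,\beta}$, and standard Schauder bootstrapping for the now uniformly elliptic equation gives $C^\infty$. So the whole proof reduces to excluding line segments in the graph, that is, showing strict convexity of $u$ on $\mathbb{R}^n\setminus\overline{O}$.

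\emph{Step 1: a global convex structure.} Since $u$ is only locally convex, I would first show that $u$ is convex along every segment contained in $\mathbb{R}^n\setminus O$. A locally convex function is convex on every segment in its domain, because convexity on an interval is a local property. Since $O$ is convex and $n\ge 2$, the complement $\mathbb{R}^n\setminus \overline{O}$ is connected, so we can move between points along polygonal paths avoiding $O$.

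\emph{Step 2: contact sets.} Suppose $u$ is not strictly convex near $x_0$. Then there is an affine $L$ supporting $u$ near $x_0$ such that the contact set $\Sigma=\{u=L\}$ contains a nondegenerate segment through $x_0$. By Step 1, $\Sigma$ is convex along any segment that stays in the exterior, so the maximal segment $\ell\subset\Sigma$ through $x_0$ is well defined. Proposition \ref{no_extremal}, applied in small balls around each point of $\ell$ where $u$ is convex and $\det D^2u=1$, shows that $\ell$ has no endpoint inside $\mathbb{R}^n\setminus\overline{O}$. Hence each end of $\ell$ either lies on $\partial O$ or escapes to infinity. Both ends cannot lie on $\partial O$: convexity of $O$ would put $\ell\subset\overline{O}$, contradicting $x_0\notin\overline{O}$. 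Thus at least one end of $\ell$ is an infinite ray $R$ on which $u$ is affine.

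\emph{Step 3: ruling out rays (main obstacle).} This is the step I expect to be hardest. The idea is that $\det D^2u=1$ forces at least quadratic growth transversally, and an infinite affine ray is incompatible with this. I would consider sections $S_h(x,p)$ centered at points $x\in R$ far out. Convexity along $R$, together with $u=L$ on $R$, forces these sections to contain long pieces of $R$. For points far from $O$ the sections stay away from $O$, so Alexandrov's maximum principle together with $|S_h|\lesssim h^{n/2}$ (John ellipsoid normalization) applies. A section that is long in the direction of $R$ must then be correspondingly thin in the transverse directions. Letting the center go to infinity while $h$ is fixed, I expect these thin sections to contradict the lower bound $\mathcal{M}u(S_h)\ge |S_h|\cdot 1$ combined with the gradient image estimate. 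An alternative route is to compare $u$ with an explicit quadratic barrier in a cylinder around $R$.

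Once rays are excluded, $u$ is strictly convex near every exterior point, and the regularity chain from the first paragraph gives $u\in C^\infty(\mathbb{R}^n\setminus\overline{O})$.
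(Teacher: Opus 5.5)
The paper does not prove this statement; it quotes it from Caffarelli--Li, so your proposal has to be judged on its own. Its architecture is sound. Strict convexity near each exterior point, combined with Caffarelli's interior theory, gives $C^\infty$. Proposition~\ref{no_extremal} pushes a nontrivial contact set to infinity. However, the step that carries all the content, excluding an affine ray, is not proved, and the mechanism you sketch points the wrong way.

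If $x$ is in the relative interior of $R$ and $q\in\partial u(x)$, then $q\cdot d=\nabla L\cdot d$. So the plane $u(x)+q\cdot(\cdot-x)$ coincides with $u$ along all of $R$, and every section $S_h(x,q)$ centered on $R$ contains the whole ray, including its initial point, which may lie on $\partial O$. These sections are unbounded and do not stay away from $O$. The bound $|S_h|\lesssim h^{n/2}$ holds only for bounded sections compactly inside the region where $\det D^2u\ge 1$, so it does not apply to them. Moreover, ``long but transversally thin'' is no contradiction at all. The contradiction comes from the sections \emph{not} being thin.

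There is a smaller gap in Step 2. If the contact set has dimension $\ge 2$, the endpoint of a maximal segment in it need not be an extremal point. Proposition~\ref{no_extremal} therefore does not stop $\ell$ there.

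Both gaps close if you work in an open half-space $H=\{x\cdot\nu>c\}$ that contains $x_0$ and misses $\overline{O}$; it exists because $\overline{O}$ is compact and convex. On $H$, $u$ is genuinely convex, which makes Step 1 unnecessary, and $L$ supports $u$ on all of $H$. The set $\Sigma=\{u=L\}\cap H$ is convex and relatively closed in $H$. If $\Sigma$ is not a single point, Proposition~\ref{no_extremal} says it has no extremal points in $H$. Then $\overline{\Sigma}$ cannot be bounded: it would be the convex hull of extreme points lying on $\partial H$, which is impossible since $x_0\in H$. Hence $\overline{\Sigma}$ contains a ray $x_0+\mathbb{R}_+d$ with $d\cdot\nu\ge 0$, and this ray lies in $H$.

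Now put $v=u-L\ge 0$. For $x\in H$, $t\ge 0$ and small $\lambda>0$, write $x+td=(1-\lambda)\frac{x-\lambda x_0}{1-\lambda}+\lambda\bigl(x_0+\tfrac{t}{\lambda}d\bigr)$. Convexity gives $v(x+td)\le (1-\lambda)\,v\bigl(\frac{x-\lambda x_0}{1-\lambda}\bigr)$, and letting $\lambda\to 0$ yields $v(x+td)\le v(x)$. This monotonicity along $d$ is the idea missing from your Step 3.

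Choose $r>0$ with $B_{2r}(x_0)\subset H$ and $v<1$ on $B_{2r}(x_0)$. Then $0\le v<1$ on the half-cylinder $B_{2r}(x_0)+\mathbb{R}_+d$. The gradient estimate therefore puts $\partial u(Z)$ inside the ball of radius $1/r$ about $\nabla L$, where $Z=B_r(x_0)+\mathbb{R}_+d$. On the other hand $|\partial u(Z)|=|Z|=\infty$, a contradiction. Equivalently, the translates $u(\cdot+td)-L(\cdot+td)$ decrease to a $d$-invariant limit whose Monge--Amp\`ere measure vanishes, which contradicts weak convergence of the Monge--Amp\`ere measures.
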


%% Add Caffarelli-Yuan result as well
%% Add Boundary maps to boundary
\subsection{Obstacle Problem}\label{obstacles}
At the heart of our proof lies an obstacle problem. The data are a bounded strictly convex domain $U \subset \mathbb{R}^n$, boundary data $\varphi \in C\left(\overline{U}\right)$, an obstacle $g : \overline{U} \rightarrow \mathbb{R} \cup \{+ \infty\}$ which is lower semicontinuous and satisfies $g > \varphi$ on $\partial U$, and a finite Borel measure $\mu$ on $U$. We define the class of functions $\mathcal{F}$ by

\begin{equation}\label{class}
    \mathcal{F} := \left\{v : v \in C\left(\overline{U}\right) \text{ convex},\, v \leq g \text{ in } U,\, v|_{\partial U} \leq \phi,\, Mv \geq \mu\right\}.
\end{equation}

\noindent
It is shown in \cite[Proposition 2.1]{MR1},
\begin{lem}\label{obstacle_problem}
	The set $\mathcal{F}$ is non-empty, the function
	$$u := \sup_{\mathcal{F}} v$$ 
	belongs to $\mathcal{F}$, and 
	$$Mu = \mu \text{ in } \{u < g\} \cap U.$$
\end{lem}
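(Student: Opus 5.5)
The plan is Perron's method, in three steps: show $\mathcal{F}$ is non-empty, show $u=\sup_{\mathcal{F}} v$ lies in $\mathcal{F}$, and then show $Mu=\mu$ on $\{u<g\}\cap U$ by a lifting argument.

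\emph{Non-emptiness.} Let $w$ be the convex Alexandrov solution of $\det D^2 w=\mu$ in $U$ with $w=\varphi$ on $\partial U$. When $\varphi$ is not the trace of a convex function, we use instead the solution with boundary data $\min\varphi$. This solution exists because $U$ is bounded and strictly convex and $\mu$ is finite.

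\begin{itemize}
\item Since $\overline U$ is compact and $g$ is lower semicontinuous, $g$ is bounded below on $\overline U$, say $g\ge m$.
\item Subtracting a large constant $C$ from $w$ keeps the Monge--Amp\`ere measure at $\mu$ and pushes $w$ below both $g$ and $\varphi$.
\item Alternatively, $w-C$ can be replaced by $\max(w-C,\ell)$ for suitable affine $\ell$. Either way we obtain an element of $\mathcal{F}$.
\end{itemize}

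The same construction gives a uniform upper bound on elements of $\mathcal{F}$. By convexity, $v\le \max_{\partial U}\varphi$ for every $v\in\mathcal{F}$.

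\emph{$u\in\mathcal{F}$.} The supremum $u$ is convex, satisfies $u\le g$ and $u|_{\partial U}\le\varphi$, and is bounded above. Fix one $v_0\in\mathcal{F}$. Since $\max(v,v_0)\in\mathcal{F}$ (the max of two functions with $Mv\ge\mu$ still has measure $\ge\mu$, a standard Alexandrov fact), we may restrict to $v\ge v_0$. The family is then locally uniformly bounded and locally Lipschitz.

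Choose a countable dense set of points. A diagonal argument over finite maxima gives an increasing sequence $v_j\in\mathcal{F}$ with $v_j\to u$ locally uniformly in $U$. Weak continuity of Monge--Amp\`ere measures under local uniform convergence then gives $Mu\ge\mu$, using lower semicontinuity on open sets in the right direction.

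Continuity up to $\partial U$ is handled with barriers. Strict convexity of $U$ gives, at each boundary point, a supporting hyperplane. We combine it with $v_0$ and with the upper bound: $u$ is squeezed between $v_0$ and the harmonic-type upper barrier given by the convex envelope of the boundary data. Continuity of $\varphi$ and $v_0$ then yields $u\in C(\overline U)$ with $u\le\varphi$ on $\partial U$.

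\emph{$Mu=\mu$ off the contact set.} Fix $x_0\in U$ with $u(x_0)<g(x_0)$. Lower semicontinuity of $g$ and continuity of $u$ give a small ball $B=B_r(x_0)\Subset U$ with $u<g-\varepsilon$ on $\overline B$.

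Suppose $Mu\ne\mu$ in $B$. Since $Mu\ge\mu$, this means $Mu(E)>\mu(E)$ for some compact set $E$, which we may take inside a smaller ball. Now solve the Dirichlet problem in $B$ (or in a small strictly convex subdomain such as a section of $u$) with $\det D^2 w=\mu|_B$ and $w=u$ on $\partial B$.

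\begin{itemize}
\item By the comparison principle, $w\ge u$ in $B$.
\item Since the measures differ, $w\not\equiv u$ by uniqueness. Hence $w>u$ somewhere.
\item Define $\tilde u=w$ in $B$ and $\tilde u=u$ outside. This is convex, because $w\ge u$ with equal boundary values makes it a gluing that preserves convexity, and $M\tilde u\ge\mu$.
\end{itemize}

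We must also check $\tilde u\le g$. This holds provided $w-u<\varepsilon$ in $B$, which we get by shrinking the ball: the Alexandrov estimate gives $\sup_B(w-u)\lesssim r\,\big(Mu(B)\big)^{1/n}\to0$.

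So $\tilde u\in\mathcal{F}$ and $\tilde u>u$ at some point, contradicting maximality. Hence $Mu=\mu$ on $\{u<g\}\cap U$.

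\textbf{Main obstacle.} I expect the hardest step to be making the lift a genuine competitor. This has two parts:
\begin{itemize}
\item ensuring convexity of the glued function, which is why the replacement should be done on a section of $u$, where $u$ coincides with an affine function on the boundary;
\item controlling $w-u$ uniformly so that the obstacle constraint survives.
\end{itemize}
The boundary continuity of $u$ is the secondary difficulty.
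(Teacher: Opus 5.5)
The paper does not prove this lemma itself; it quotes \cite[Proposition 2.1]{MR1}. Your Perron scheme is the natural one: closure of $\mathcal{F}$ under maxima, monotone approximation, weak continuity of $M$, and lifting on a small ball controlled by $\sup_B(w-u)\le Cr\,(Mu(B))^{1/n}$. Its interior part is sound. Gluing over a ball is already convex when $w\ge u$ in $B$ and $w=u$ on $\partial B$, so you do not need sections.

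\textbf{The gap: continuity of $u$ up to $\partial U$.} This is part of the assertion $u\in\mathcal{F}$. Squeezing $u$ between $v_0$ and the convex envelope of the boundary data proves nothing at the boundary. Your $v_0=w-C$ lies below $\varphi$ by a fixed amount on all of $\partial U$, so the two barriers do not meet. Moreover, a lower semicontinuous convex function on a strictly convex domain can be discontinuous at a boundary point. Notice that you never use the hypothesis $g>\varphi$ on $\partial U$, and without it the conclusion fails. Take $U$ the unit disk centered at $(0,1)$, $\mu=0$, $\varphi=2-x_2$ and $g=x_1^2/x_2$ (with $g(0)=0$). Every affine function $ax_1-\tfrac{a^2}{4}x_2$ lies in $\mathcal{F}$, so $u=g$ in $U$. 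On the other hand $u(0)=0$, since each $v\in\mathcal{F}$ is continuous and $g=0$ on the $x_2$-axis, while $u=1$ along $x_2=x_1^2$.

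\textbf{What is missing} is a family of lower barriers in $\mathcal{F}$ that nearly attain $\varphi$ at each boundary point. Let $w$ solve $Mw=\mu$ in $U$ with $w=\varphi$ on $\partial U$. This is solvable for all continuous data since $U$ is strictly convex, so your fallback to $\min\varphi$ is unnecessary.

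Because $g-\varphi$ is lower semicontinuous and positive on the compact set $\partial U$, there are $\delta>0$ and a relatively open neighborhood $N$ of $\partial U$ in $\overline{U}$ on which $g>\varphi+\delta$ and $w<\varphi+\delta$. Given $x_0\in\partial U$ and the inner unit normal $e$ of a supporting hyperplane there, strict convexity gives $(x-x_0)\cdot e\ge c>0$ on the compact set $\overline{U}\setminus N$. Hence $v=w-\varepsilon-K(x-x_0)\cdot e$ lies in $\mathcal{F}$ for $K$ large, with $v(x_0)=\varphi(x_0)-\varepsilon$. Combine this with the affine upper barrier $\varphi(x_0)+\varepsilon+K(x-x_0)\cdot e$, which dominates every element of $\mathcal{F}$. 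Together they give $u=\varphi$ on $\partial U$ and $u\in C(\overline{U})$.

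\textbf{Two smaller slips.} First, $\max(w-C,\ell)$ is generally not in $\mathcal{F}$, because $M\ell=0$. Second, $Mu\ge\mu$ follows from $\limsup_j Mv_j(K)\le Mu(K)$ for compact $K$ together with inner regularity of $\mu$, not from lower semicontinuity on open sets.
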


We will primarily use the obstacle $g = g_{n,k,\alpha}$, to be defined below. For  $0<\alpha<1$, define $g_1$ to be
  $$
       g_1(x) = \left\{
	\begin{array}{ll}
		\frac{x^2}{2},  & \mbox{if } x\in \{|x|\leq \alpha\}; \\
		  \varphi_\alpha, & \mbox{if } x \in \{\alpha<|x| \leq 1\};\\
          \infty, & \text{otherwise},
	\end{array}
\right.
  $$
   where $\varphi_\alpha':(\alpha,1]\to \mathbb{R}$ is an increasing function  with $\varphi_\alpha' \to \infty$ as $x\to 1$ , $\varphi_\alpha \leq 5$ and $g_1\in C^\infty((-1,1))$. Now for $x\in \mathbb{R}^n$ define $g_{n,\alpha}$ to be
   \begin{equation}
       g_{n,\alpha}(x) = g_1(|x|).
   \end{equation}
   For $(x,y)\in  \mathbb{R}^{n-k}\times \mathbb{R}^k$ define, $g_{n,k,\alpha}$ as
   \begin{equation}\label{main_obstacle}
       g_{n,k,\alpha}(x,y) = \left\{
	\begin{array}{ll}
		g_{n,\alpha}(x,y),  & \mbox{if } x=0 ;\\
          \infty, & \text{otherwise}.
	\end{array}
\right.
   \end{equation}

\subsection{Barriers} 
For $n \geq 1$, one of our main barriers is
\begin{equation}\label{OneDelta}
	W_n(x) := \int_0^{|x|} \left(1 + s^n\right)^{\frac{1}{n}}\,ds,
\end{equation}
which solves
\begin{equation}\label{FlatSuper}
	\det D^2W_n = 1 + |B_1|\delta_0
\end{equation}
in the Alexandrov sense. It also satisfies
\begin{equation}\label{GrowthRadial}
	W_n(x) - \frac{1}{2}|x|^2 = \begin{cases}
		O(|x|), \quad n = 1 ;\\
		O(|\log|x||), \quad n = 2 ;\\
		c(n) + O(|x|^{2-n}), \quad n \geq 3,
	\end{cases}
\end{equation}
for some constant $c(n) > 0$, and 
\begin{equation}\label{DistGrowth}
	W_n(x) \geq |x|
\end{equation}
for all $x\in\mathbb{R}^n$.

We also define a useful family of Pogorelov-type barriers constructed in \cite{M1}.
We denote points in $\mathbb{R}^n$ by $(x,\,y)$ with $x \in \mathbb{R}^{n-k}$ and $y \in \mathbb{R}^k$. For $n \geq 3$ and $1 \leq k < \frac{n}{2}$, define the function $w_{n,\,k}$ on $\mathbb{R}^n$ by
\begin{equation}\label{PogoEx}
	w_{k,\,n}(x,\,y) = C(n) |x|^{2-2k/n}(1+|y|^2).
\end{equation}
For $C(n)$ sufficiently large we have
$$\det D^2w_{k,\,n} \geq 1$$
 in the slab $\{|y| < \rho_n\}$ for some $\rho_n > 0$.

\section{Connection with Optimal Transport}\label{examples}
Let $X$ and $Y$ be two bounded open sets in $\mathbb{R}^n$ such that $|X|=|Y|=1$. Here $|A|$ denotes the Lebesgue measure of $A$ in $\mathbb{R}^n$. We work with quadratic cost optimal transport between $X$ and $Y$. First of all, an optimal transport map $T$ exists, and is gradient of a convex function $u$.  According to Brenier’s Theorem (see \cite{B1, B2}), there exists a convex function $u : \mathbb{R}^n \to \mathbb{R}$ such that $\nabla u_{\#} \chi_X = \chi_Y$ and $\nabla u(x) \in \overline{Y}$ for a.e.\ $x \in \mathbb{R}^n$. As shown by Caffarelli (see \cite{C1}), if $Y$ is convex, then $u$ satisfies $\det(D^2 u)=1$ in Alexandrov sense, that is, for any Borel measurable set $A\subset \mathbb{R}^n$, we have $|A\cap X|=|\partial u(A)|$.
 When $X$ and $Y$ are convex, the regularity theory is well understood (see \cite{C2}, \cite{SY}, and \cite{CT}).
 
 In this section we focus on examples when one of the domains is non-convex. Through these examples, we see that one can expect to see the types of singularities we are constructing. For regularity results of optimal transport map when one of the domains is nonconvex, see \cite{MR2}, and \cite{CJLPR}. \\

 In \cite{C1}, Caffarelli gave an example when $Y$ is not convex, the potential of the optimal transport map is not $C^1$. We recall this example.

Let $n=2$, and let $X=B_1\subset \mathbb{R}^2$ be the unit ball centered at the origin. Let
$$
Y := (B_1^+ + e_1)\cup (B_1^- - e_1),
$$
where
$$
B_1^+ := B_1\cap \{x>0\}, \qquad B_1^- := B_1\cap \{x<0\},
$$
and $\{e_1,e_2\}$ denotes the standard basis of $\mathbb{R}^2$. The optimal transport map is given by
$$
T(x,y):=
\begin{cases}
(x+1,y) ,& \text{if } \{x>0\}\cap X;\\
(x-1,y), & \text{if } \{x<0\}\cap X,
\end{cases}
$$
which coincides with the gradient of the convex function
$$
u(x,y)=\frac{|x|^2+|y|^2}{2}+|x|.
$$
It is easy to check that $\det(D^2u)=1+2\mathcal{H}^1\vline_{\{x=0\}}$ in Alexandrov sense, that is, singularities arise along $y$ axis. For a detailed study of the optimal transport map in Caffarelli's example when the target domain remains connected, see \cite{CJLPR}.\\

Motivated by Caffarelli's example, we construct a series of new examples of optimal transport map in $\mathbb{R}^2$ where we get singularities supported on different $1$ dimensional sets. We also use the back and forth method developed in \cite{JL} to understand the singular structures in the following examples.

%% Add Caffarelli example and it's generalizations as well!!

\begin{ex}[\textbf{Framed Diamond}]\label{main_example}
\begin{figure}[ht]
    \centering
    \includegraphics[width=0.7\textwidth]{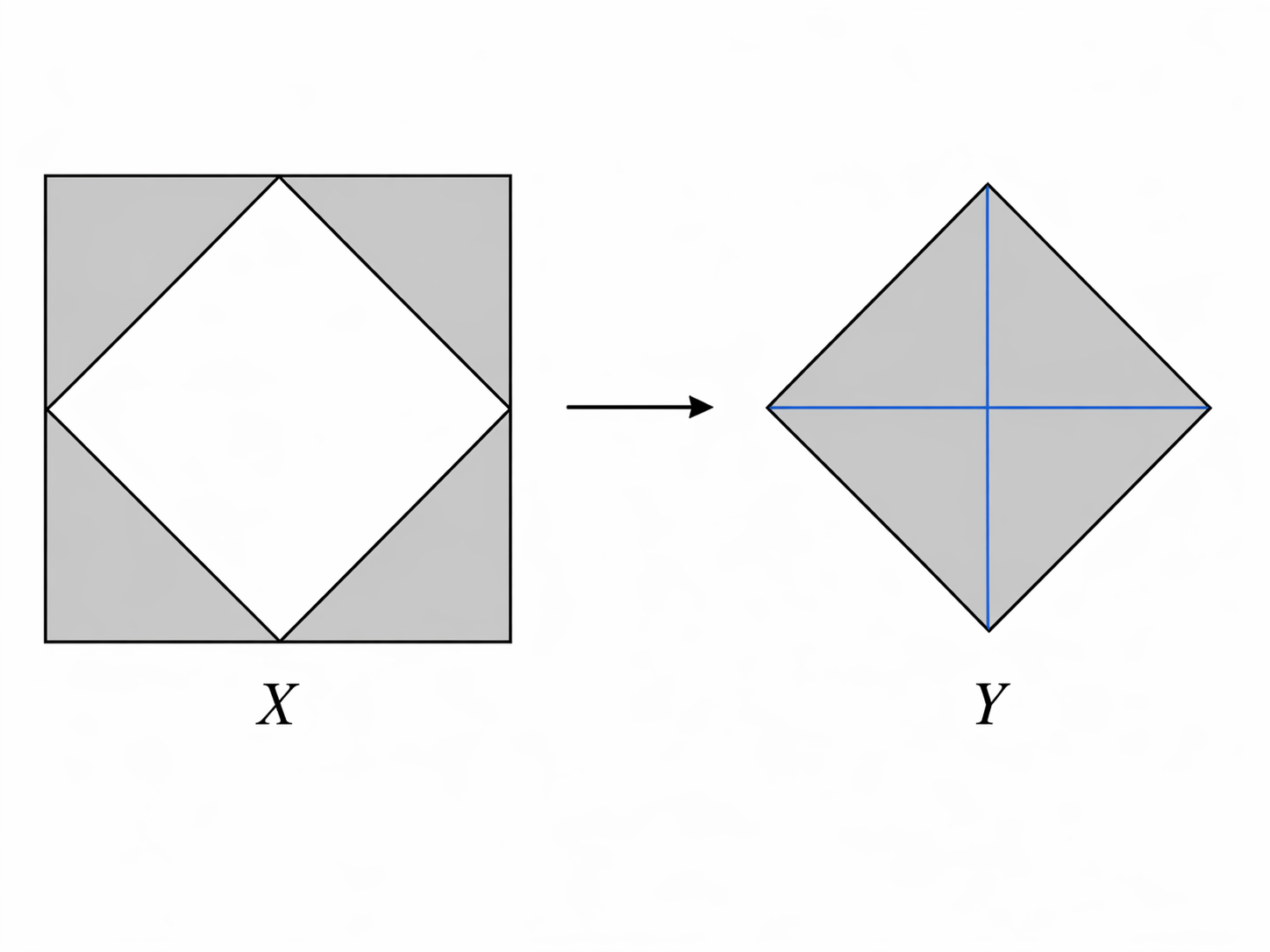}
    \caption{Optimal Transport with a `plus' singularity }
    \label{fig:plus}
\end{figure}

\begin{figure}[h]
    \centering
    \includegraphics[width=1\textwidth,keepaspectratio]{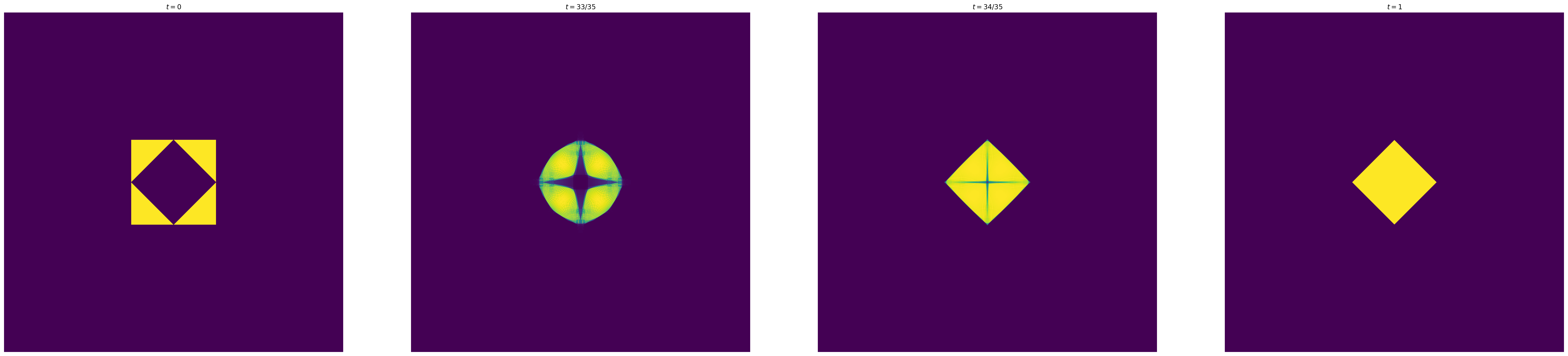}
    \caption{Displacement Interpolation between $X$ and $Y$}
    \label{fig:main_examplen}
\end{figure}

    We construct an example with a cross singularity arising in optimal transport in $\mathbb{R}^2$.
    Let $A = \{(x,y)\in\mathbb{R}^2: \max\{|x|,|y|\} < 1\}$ and $B = \{(x,y)\in\mathbb{R}^2:|x|+|y| < 1\}$. Consider the domains $X = A\setminus \bar{B}$ and $Y = B$ (see Figure \ref{fig:plus}). Let $T: X\to Y$ be  the optimal transport map from $X$ to $Y$. We show that the dual optimal transport map $T^{*}: Y \to X$ has a cross singularity in the sense that the potential of the optimal transport map is not differentiable (blue line in Figure \ref{fig:plus}).\\
    By symmetry and cyclic monotonicity, one can argue that $T$ maps each quadrant to itself. Let $Q_1 = \{(x,y)\in\mathbb{R}^2\;|\; x> 0,y > 0\}$. Consider the map $T_1: X\cap Q_1 \to Y\cap Q_1$.  By Remark 2.2 in \cite{AC}, $T_1^*$ is continuous  in the interior. This gives that there is no singular point for $T^*$ except maybe along $xy=0$.

    %Now that we have restricted $T$ to a mapping between two symmetric convex domains, one can expect boundary points mapping to boundary points( note that it is not immediate since our boundaries are not uniformly convex). Indeed, since our domains are reflections about the line $x+y=1$, we have $T_1^{*} = T_1\circ R$. Here $R$ is the reflection about the line $x+y=1$. Thus if some boundary point $x\in \partial (X\cap Q_1)$ gets mapped to some interior point $y\in Y\cap Q_1$, one would have $Ry\in X\cap Q_1$ maps to the boundary. However, since $Ry$ is in the interior, the map should be a smooth diffeomorphism in an open neighborhood of it. However, there cannot be a diffeomorphism from an open set to a set with boundary. Thus symmetry and interior smoothness imply that boundary points must map to boundary points.\\
    Any point on $x$ axis in $Y$ has to get mapped to two points on $X$ by symmetry. Thus, every point on $x$ axis (similarly on $y$ axis) is a singular point for $T^*$. Therefore the potential of the dual optimal transport plan satisfies an equation of the form, $\det(D^2u) = 1+ f\mathcal{H}^1(\Sigma)+a_0\delta_0$. Here $\Sigma$ is the blue line in Figure \ref{fig:plus}, $f \in L^{\infty}(\Sigma)$ is a nonnegative function, and $a_0>0$ is a fixed constant.
    
    %We now consider the set $Y_1 = (\bar{Y}\cap\{x=0\})\setminus \{T_1((0,1))\}$. One can see that $T_1^{*}(Y_1) \subseteq Q_1$. However, if we restrict the domain to the second quadrant, $Y_1$ remains part of the boundary of the target domain. Conclude  $T^{*}:Y\to X$ splits mass along $Y_1$, that is,  we have singularities along $Y_1$. By symmetry it will result in a cross shaped singularity. In Figure \ref{plus}, the blue line represents the singularities of $T^*$.
    %add blue line is the singularity

\end{ex}

\begin{rem}
The above construction can be generalized by considering an $n$-gon together with a suitably rotated and rescaled copy inside it. This yields examples in which several singular line segments intersect at a single point, forming a star-shaped configuration. Similar singularities can also be generated by modifying Caffarelli's example if we change the target domain by spreading it out into $k$ equal parts, instead of splitting it into two halves.
\end{rem}

\begin{ex}[\textbf{Square Frame}]\label{square}

%add the triangle example etc
Let $Q = \{\max\{|x|,|y|\} < 1\} \subset \mathbb{R}^2$ be a square. Set $\lambda =0.8$ and define
$$
\lambda Q = \{ \lambda x : x \in Q \}.
$$
Consider the optimal transport map taking uniform density on $Q\setminus \lambda Q \:\:\text{to} \:\:\lambda Q$ and denote it by 
$
\mathcal{T} : Q \setminus \lambda Q \to \lambda Q
$
such that
$
\mathcal{T}_{\#} \mu = \nu,$ where $\mu$ and $\nu$ are  uniform source and target density. Then as per the numerical models as in \cite{JL}, the singularities of the dual optimal transport map look like a cross singularity (see Figure \ref{fig:numerical}). Figure \ref{fig:numerical} represents displacement interpolation, as defined in \cite{McCann}, between the domains $Q\setminus \lambda Q$ (on the left) and $\lambda Q$ (on the right).
\begin{figure}[h]
    \centering
    \includegraphics[width=1\textwidth,keepaspectratio]{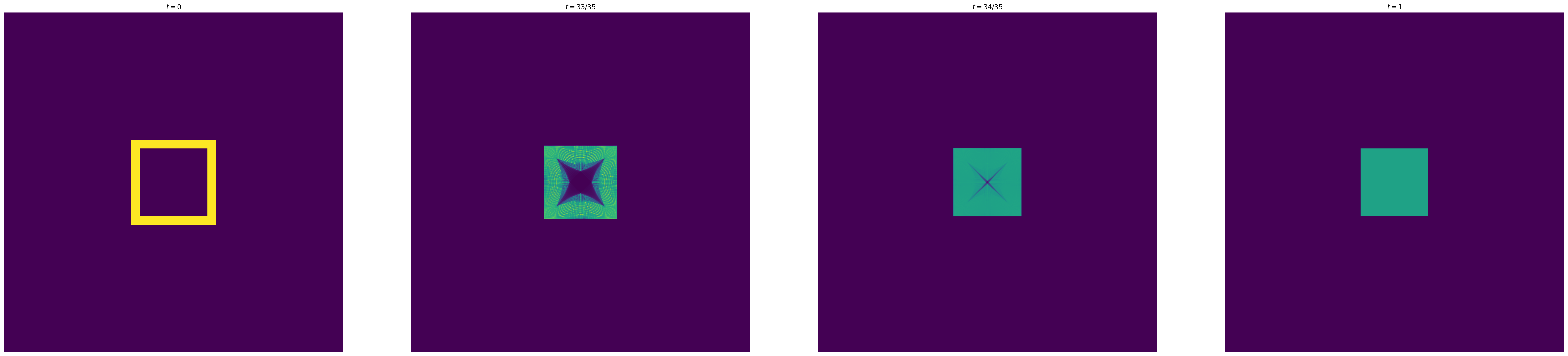}
    \caption{Displacement Interpolation between $Q\setminus \lambda Q$ and $Q$.}
    \label{fig:numerical}
\end{figure}

\noindent
By exploiting symmetry of the source and target domain as in example \ref{main_example} one can see that the singular set will be a subset of the diagonals of the square $\lambda Q$. The numerical model shows it is in fact a cross. Therefore the potential of the optimal transport plan satisfies an equation of the form, $\det(D^2u) = 1+ f\mathcal{H}^1(\Sigma)+a_0\delta_0$. Here $\Sigma$ represents the diagonals of the target domain, $f\in L^{\infty}(\Sigma)$ is a nonnegative function, and $a_0>0$ is a constant.

\end{ex}

\begin{ex}[\textbf{Pacman}] Define 
$$C = \{(x,y)\in\mathbb{R}^2:|x|^2+|y|^2 < 1\}$$ 
and
$$\Gamma = \{(x,y)\in\mathbb{R}^2: y < m|x|\}$$
for $m>0$ (for the numerical model, in Figure \ref{fig:pacman}, we have taken $m=2$). Consider the optimal transport map taking uniform densities on $C\setminus \Gamma$ to $C$ and denote it with $T:C\setminus \Gamma\to C$. By using numerical model as in \cite{JL}, the singularity of the dual optimal transport map is
a segment along $y$-axis starting from an interior point of $\Gamma$ and extending up to the boundary of $C$.
%(\textbf{maybe better way to say this:}) ray starting from a point on the $y-axis$ and going till the boundary of $C$ along the $y-$axis.
\begin{figure}[h]
    \centering
    \includegraphics[width=1\textwidth,keepaspectratio]{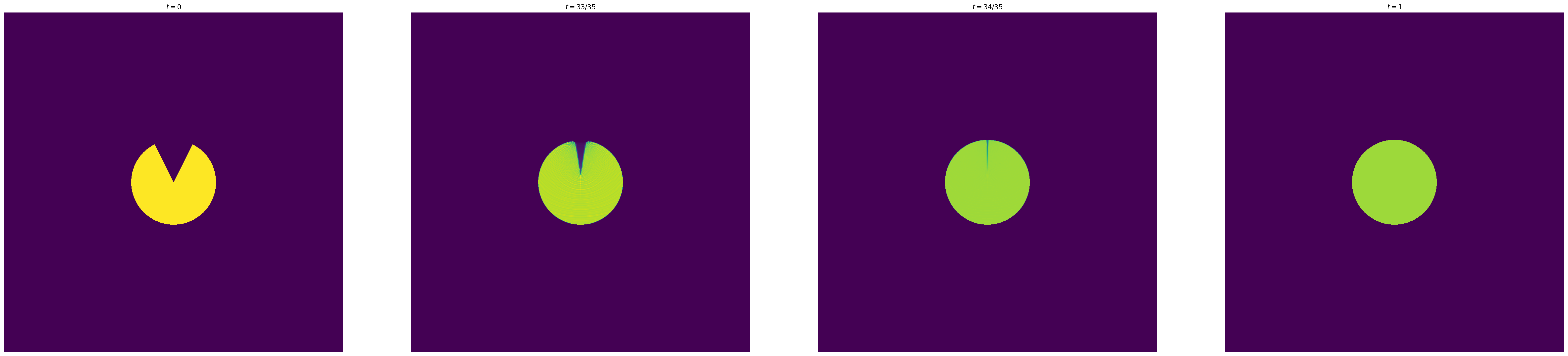}
    \caption{Displacement Interpolation between $C\setminus \Gamma$ and $C$.}
    \label{fig:pacman}
\end{figure}
Again, using symmetry and \cite[Remark 2.2]{AC}, one can see that the singular set has to be a subset of positive $y$ axis. The numerical model supports this.

\begin{ex}[\textbf{Cat's eye}] Define the domains 
$$C = \{(x,y)\in\mathbb{R}^2:|x|^2+|y|^2 < 1\}$$
and 
$$\mathcal{E}_e = \{(x,y)\in\mathbb{R}^2:|x|^2+\frac{|y|^2}{e} < \frac{2}{15}\}$$
for $e>1$ (in Figure \ref{fig:ellipse_numerical}, we have taken $e=5$). Consider optimal transport map taking uniform densities on $C\setminus \mathcal{E}$ to $C$ and denote it with $T_e:C\setminus \mathcal{E}_e\to C$. By using numerical model as in \cite{JL}, the singularity of the dual optimal transport map is along the $y-$axis and is strictly contained inside the domain $C$. In fact, one expects the potential of the optimal transport plan satisfies $\det(D^2u) = 1+f\mathcal{H}^1(L)$, where $L = \{(0,y)\;|\; |y| \leq \epsilon\}$ and $f\in L^{\infty}(L)$ is a nonnegative function. 
\begin{figure}[h]
    \centering
    \includegraphics[width=1\textwidth,keepaspectratio]{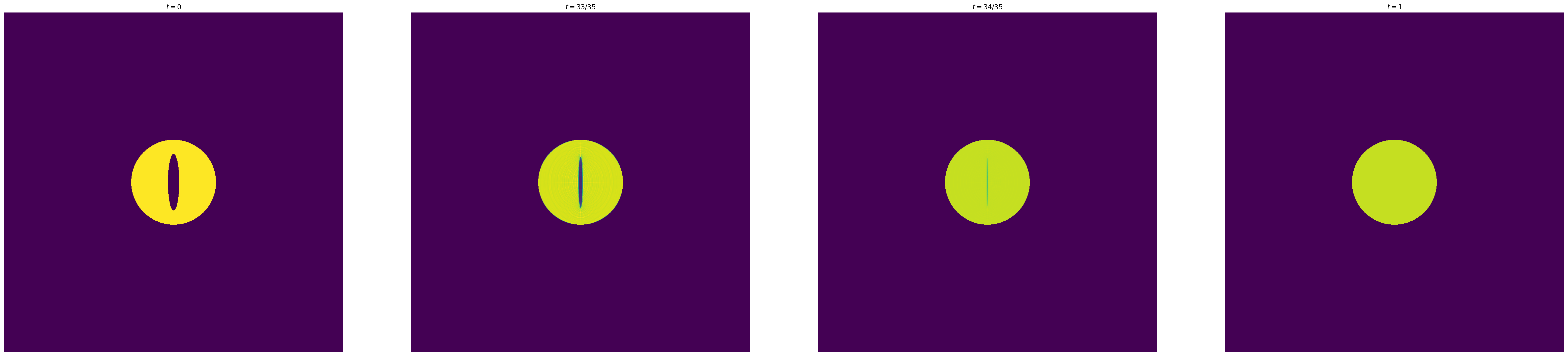}
    \caption{Displacement Interpolation between $C\setminus \mathcal{E}_e$ and $C$.}
    \label{fig:ellipse_numerical}
\end{figure}

%Heuristic: Note that in this example it is not possible to the study singularity by reducing to the case of mapping between convex domains. However as shown in \cite{MR2} $T_e \in C^{1/2}(C\setminus \mathcal{E}_e)$. Moreover by symmetry we can still restrict to mappings in a particular quadrant. Therefore for the dual optimal transport map the singularities must lie along $xy=0$. Now heuristically we expect it lie only on the $y-$axis is because if we consider the maps $T_e$ as $e\to \infty$, $\mathcal{E}_e$ converges to a subset of the $y-$axis. Moreover as $e\to \infty$, the point $(\frac{2}{15e},0)\in \partial \mathcal{E}_e$, is close enough to the origin that it can collapse to the origin instead of some other part of $\partial (C\setminus \mathcal{E}_e)$ mapping to the origin.

\end{ex}

\begin{rem}
    Taking the partial Legendre transform of solutions to Monge-Ampere equation, 
    \begin{equation}\label{sig_rem}
        \det(D^2u) = 1+f\mathcal{H}^1\vline_L,
    \end{equation}
    in the $x-$variable. Here $L$ is a line, 
    like in our Cat's eye example, satisfy an equation of the form,
    $$\Delta u^{*} = 0, \text{ except on } L $$
    and on $L$, $\partial_x u^*$ has a jump discontinuity. This shows a connection between Optimal transport of non-convex domains and the Signorini problem. Using standard theory from the thin obstacle problem (\cite{real}), this also shows that at the end points of $L$, $u^*$ separates from the thin obstacle at a rate of $r^{3/2}\cos(\frac{3}{2}\theta)$ in polar coordinates centered at one of the end points. This connection is not surprising given our construction of solutions to equations like \eqref{sig_rem} via an obstacle problem. We intend to investigate this connection further in a future work. 
\end{rem}

\noindent
%Heuristic: By exploiting symmetry and interior regularity as in example \ref{main_example}, we can conclude that singularity of the dual optimal transport map has to lie on the $y-$axis. Moreover since the origin $(0,0)$ is further away from the boundary point $(0,1)\in \partial C$ than $(\frac{1}{\sqrt{5}},\frac{2}{\sqrt{5}})\in \partial (C\setminus \Gamma) $, one expects a part of the curve $y=m|x|$ to map to the $y-$axis. This would result in a singularity for the dual optimal transport map.
\end{ex}

\section{Main proof}\label{proof}
In this section we prove the theorems stated in Section \ref{intro}. Recall that we write our coordinates as $(x,y)\in \mathbb{R}^{n-k}\times \mathbb{R}^k$. For $a>0$, define $S_a =\{0\}^{n-k}\times (-a,a)^k$. For two constants $p,q$, an assumption of the form $p\ll q$ means there exists a constant $c$  such that when $p\leq cq$ then conclusion holds, where $c$ depends on $n$.

%\begin{thm}\label{line}
%    For any $a>0$ and $\alpha\in (0,1)$,
%    there exists a convex function $u\in C^\infty(\mathbb{R}^n\setminus S_a)$, and a non-negative function $f \in L^{\infty}(S_a)$, that solves
 %   $$\det(D^2u) = 1+f(0,y)\mathcal{H}^{k}|_{S_a}$$
%    such that $f \geq c>0$ on $S_{\alpha a}$, where $c$ depends on $a$ and $n$ .\end{thm}
\subsection{Proof of Theorem \ref{line}}
\begin{proof} 
    For $\epsilon>0$ small, to be fixed later, we quadratically rescale everything so that $a = \epsilon$. 
     We now solve the obstacle problem as in Lemma \ref{obstacle_problem} with parameters $g(x)=g_{n,k,\alpha}(\epsilon x)$, $\phi = W_n+10$, $\mu = 1 dx$ and $U = B_R$ for any $R>1$. For these choices of parameters and $\mathcal{F}$ as in \eqref{class}, define $u_R:=\sup_{v\in \mathcal{F}}v $. \\

\textit{Boundedness from Above:} Since $u_R\in\mathcal{F}$,  we have $\det(D^2(W_n+10))=1\leq \det(D^2u_R)$   on $B_R\setminus \{0\}$, and $u_R\leq W_n+10$ on $\partial B_R\cup \{0\}$. Thus by maximum principle $u_R \leq W_n+10$ on $B_R$.\\
%You do not need +10 here.
%\textbf{Discuss changes here!}

\textit{Persistence of Singularity for all $R$:} Consider $D(x) = W_n(x)-W_n(\epsilon \textbf{e})$ where $\textbf{e}\in \mathbb{S}^{n-1}$. Since $D \leq 0 \leq g$ in $B_\epsilon$,  we have $D\in \mathcal{F}$. Choose $\rho$ such that $\epsilon \ll\rho \ll 1$. On $\partial B_\rho$ we have
$$D = W_n(\rho \textbf{e})-W_n(\epsilon \textbf{e}) \geq \rho - W_n(\epsilon \textbf{e}) \geq \rho/2.$$
Since $\rho\ll 1$, we have $\rho^2/2 \leq \rho/4$. Consider,
$$\Phi = \frac{|(x,y)|^2}{2}+\frac{|x|}{4}.$$
    We have $\Phi \leq g$ and $\Phi \leq D$ on $\partial B_\rho$. Define $\tilde{\Phi}$ to be $ \max\{\Phi,D\}$ in $B_\rho$ and $D$ in the compliment of $B_\rho$. Since $\tilde{\Phi}\in\mathcal{F}$, we have $u_R\geq \tilde{\Phi}$ and it satisfies $\det(D^2u_R) \geq 1+\tilde{f}_R\mathcal{H}^{k}|_{S_{\epsilon}}$, for some $\tilde{f}_R\geq 1/2$ on $S_{\alpha \epsilon}$. Note that, by construction, on the set $S_{\epsilon}$ we have that $\{u_R<g\}$ is non-empty, otherwise $u_R$ would have unbounded gradient. We also note that on the relative boundary of $ \{u_R<g\}$ inside $S_{\epsilon}$, $u_R$ cannot develop a singular measure other than what we are trying to construct. If not, let $x$ be such a point. Note that $u_R(x)=g(x)$ and $u_R \leq g$. For $\textbf{v}\in \text{span}\{e_{n-k+1},\cdots,e_{n}\}\cap \mathbb{S}^{n-1}$, if $p\in \partial u(x)$,  then $\langle p,\textbf{v}\rangle = \langle\nabla g_{n,\alpha},\textbf{v}\rangle$. This shows $p$ lies on a subset of a $n-k$ dimensional plane in $\mathbb{R}^n$. Therefore, $u_R$ solves $\det(D^2u_R) = 1+f_R\mathcal{H}^{k}|_{S_{\epsilon}}$ for some $f_R\geq 1/2$ on $S_{\alpha\epsilon}$. \\

\textit{Regularity:} We know $D\leq u_R\leq W_n+10$ for all $R>1$. For a uniformly bounded sequence of convex function, we have a subsequential convergence by Arzelà–Ascoli theorem. Up to a subsequence, define this limit by  $u:=\lim_{R\to\infty}u_R$. 
By stability of  solutions to Monge–Ampère equation under convergence, we have
$\mathcal{M}u_R \to \mathcal{M}u$ weakly as measures. Furthermore,
$\mathcal{M}u=1$ outside of $S_{\epsilon}$. Thus by taking open convex neighborhoods of $S_{\epsilon}$, converging to $S_{\epsilon}$, we get that $u\in C^\infty(\mathbb{R}^n\setminus S_{\epsilon})$ (see Theorem \ref{CL3_exterior}).
\end{proof}
\begin{rem}
   One could start with a simpler function on $\mathbb{R}$, such as $|x|^2/2$, instead of $g_1(x)$ defined in Section \ref{priliminaries} to construct the obstacle. The main difficulty would then be to avoid a singularity at the relative boundary of ${u_R < g}$ within $S_{\epsilon}$.
\end{rem}

\subsection{Proof of Theorem \ref{polytope}}
Recall that $P$ is a convex polytope such that its vertices lie on or outside the sphere $B_a$. For $0<\alpha<1$, we define $\Lambda_\alpha := \partial P \cap B_{\alpha a}$, where $\partial P$ denotes the boundary of $P$ in $\mathbb{R}^n$. Also recall that $P[k]$ denotes the $k$-skeleton of $P$. We use the notation $\omega(h)$ to denote a function with $\lim_{h\to 0} \frac{h}{\omega(h)} = 0$.

%\begin{thm}\label{polytope}
%    For $k = 1$ or $2$ and $0<\alpha<1$, there exists a convex function $u\in C^\infty(\mathbb{R}^n\setminus P[n-k])$ that solves 
%    $$\det(D^2u) = 1+f\mathcal{H}^{n-k}|_{P[n-k]},$$
%     where $f\in L^{\infty}(\partial P)$, $f \geq c>0$ on $ P[n-k]$ and $c$ depends on $a$ and $n$.
%\end{thm}
\begin{proof}
First, we perform a quadratic rescaling so that $a = \epsilon$.  For $\epsilon>0$ small  to be fixed later, we solve the obstacle problem as in Lemma \ref{obstacle_problem} with parameters $g(x)=g_{n,\alpha}(\epsilon x)|_{P[n-k]}$, $\phi = W_n+10$, $\mu = 1 dx$ and $U = B_R$ for any $R>1$. For these choices of parameters and $\mathcal{F}$ as in \eqref{class}, define $u_R:=\sup_{v\in \mathcal{F}}v$. \\
    
    \textit{Boundedness from Above:} Boundedness from above follows similarly as in the proof of Theorem \ref{line}.\\
%add a line recalling our coordinate (x,y)
    \textit{Persistence of Singularity for all $R$:} We can construct a linear function $L$ that is zero along an $(n-k)$ dimensional face, has $|\nabla L|=1/8$ and $P\subseteq \{L<0\}$. We set up our problem as in Theorem \ref{line} by considering $D = W_n-W_n(\epsilon \textbf{e})$ where $\textbf{e}\in\mathbb{S}^{n-1}$. We have $D\in \mathcal{F}$ since $D \leq 0 \leq g$ in $B_\epsilon$. Consider $\rho$ such that $\epsilon \ll\rho \ll 1$. On $\partial B_\rho$ we have
$$D = W_n(\rho \textbf{e})-W_n(\epsilon \textbf{e}) \geq \rho - W_n(\epsilon \textbf{e}) \geq \rho/2.$$
By our choice of $\rho$ we can ensure $\rho^2/2 \ll \rho/16$. By a rotation of coordinates, we can choose one of the faces, say $F$, to be along $|x|=0$. Now by considering $L$ for this face and by another possible rotation, one can ensure $P\setminus F\subseteq \{x_1>0,x_2>0,\cdots,x_{n-k}>0\}$. Consider
$$\Phi = \frac{|(x,y)|^2}{2}+\frac{|x|}{16}+L.$$
Clearly $\Phi$ touches the obstacle $g$ along the face $F$ in $\Lambda_{\alpha}$. Moreover by construction, $\frac{|x|}{16}+L = -\frac{|x|}{16}\leq 0$ on $P$, we get that $\Phi\leq \frac{|(x,y)|^2}{2}\leq g$ on $P$. The value of $\Phi$ on $\partial B_\rho$ can also be bounded by $\rho/4$. Thus, by defining $\tilde{\Phi} = \max\{\Phi,D\}$ in $B_\rho$ and $D$ outside, we can now see $u_R$ satisfies $\det(D^2u_R)\geq 1+f_R\mathcal{H}^{n-k}|_{P[n-k]}$, where $f_R \geq 1/8$ on $\Lambda_\alpha$. Again since our obstacle is smooth on $\partial P$, by arguing similarly to Theorem \ref{line} we get that the singular part of the measure is only of the form we are trying to construct.
 \\

    \textit{Regularity:} By boundedness from above and below as in Theorem \ref{line},  we can define $u:=\lim_{R\to \infty} u_R$. Moreover, we get that $\mathcal{M}u_R \to \mathcal{M}u$. Furthermore we can see $\mathcal{M}u=1$ outside of $P$. Thus, by \cite{CL3} $u\in C^\infty(\mathbb{R}^n\setminus P)$. Now it is possible that singularities arise in $(P[n-k])^c\cap P$. However, since we have  $\det(D^2u)=1$ on $(P[n-k])^c$, any singular set $\{u=L\}$ must propagate at least up to $ P[n-k]$ . Let $p\in  P[n-k]\cap \{u=L\}$. Note that if $u(p)< g(p)$, then this propagation cannot terminate here and will propagate to infinity (see Proposition \ref{no_extremal}). However we know smoothness of $u$ outside $P$. Thus, we get $u(p)=g(p)$. Note that $u\leq g$ on $ P[n-k]$. Recall we denote a section at $p$ of height $h$ by $S_h(p)$. By smoothness of $g$ along the boundary, $B_{\delta\sqrt{h}}\cap  P[n-k] \subseteq S_h(p)\cap \partial P$ for some non-zero constant $\delta$ and $h$ small. We also know that $S_h(p)$ contains at least a line segment that has some non-zero length $l$. Thus, we get that $(B_{\delta\sqrt{h}}\cap  P[n-k] )\cup \{u=L\} \subseteq S_h(p) $. Since the volume of the convex hull of $(B_{\delta\sqrt{h}}\cap  P[n-k] )\cup \{u=L\}$ is at least $c_0 (\delta\sqrt{h})^{n-k}l\cdot \omega(h)^{k-1}$, the volume of $S_h(p)$ is at least $\omega(h^{(n+k-2)/2})$. Since $u$ solves $\det(D^2u) \geq 1$, the volume of a section can at most be $h^{n/2}$ (see \cite[Lemma 2.2, Lemma 2.3]{M2}) up to a constant. This gives us a contradiction.
    %Cite Caffarelli's original paper
\end{proof}
% \begin{rem}\label{n-2rem}
%     Note that in the regularity proof above we could do better and reduce the dimension of the singularity in the above theorem to $n-2$. This is because then in $n-2$ dimensions we have quadratic growth of a section, in at least one dimension we have a line segment and in another direction we can atmost be lipschitz. Infact in the leftover dimension, by subtracting an appropriate linear function our growth can atmost be $o(h)$, thus the volume of a section is at least $\omega(h^{(n-2)/2}h) = \omega(h^{n/2})$. Which also leads to a contradiction (See \cite[Lemma 2.3]{M2} ).
% \end{rem}
\begin{rem}\label{partial_data}
    \begin{figure}[ht]
    \centering
    \includegraphics[width=0.4\textwidth]{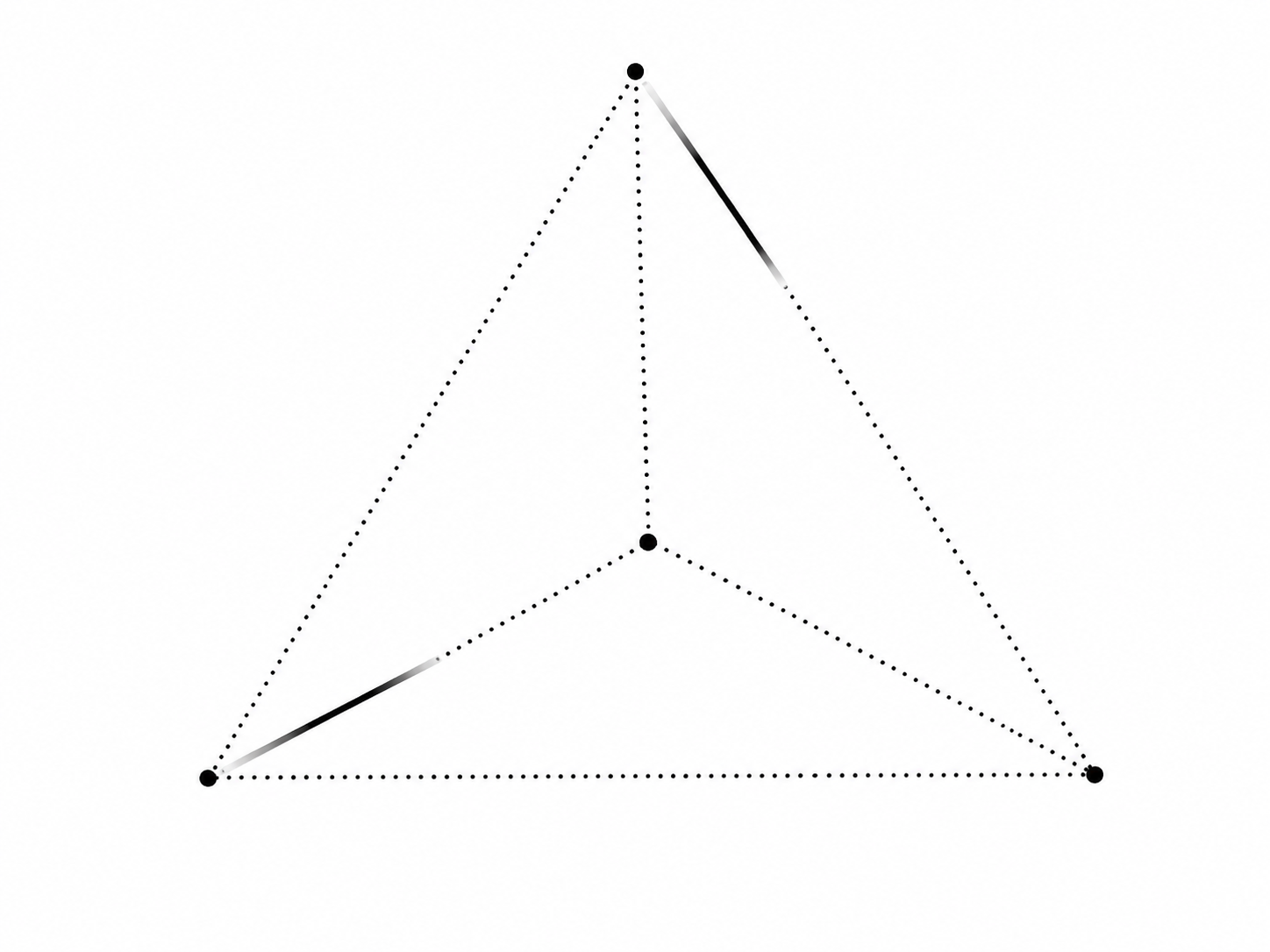}
    \caption{Partial prescription of obstacle on a tetrahedron}
    \label{partial}
\end{figure}
    By examining the proof above, one can consider any convex polytope $P$, and prescribe data on only a portion of the boundary and the result still holds. In particular, our results also cover situations where lower dimensional singularities might not be co-planar and/or connected (see Figure \ref{partial}). This is in stark contrast to singularities constructed in \cite{M1} and \cite{MR1} where this configuration cannot occur.

\end{rem}

\begin{rem}\label{interaction} We analyze dimensions $n=2,3$ in particular. In dimensions $n=2,3$, our theorem above can yield one-dimensional singularities with a solution, $u$, solving $\det(D^2u)=1$ away from those singularities. When comparing to \cite{M1}, and \cite{MR1}, we can see we have a milder singular Monge-Ampère measure in the sense that the singularity is distributed along a line segment rather than at a few points.

\end{rem}

\subsection{Proof of Theorem \ref{cross}}
%\begin{thm}\label{cross}
%    Let dimension $n=2,3$. Let $L_1,L_2,\cdots, L_k$ be line segments starting from the origin. We can construct a solution
%    $$\det(D^2u) = 1+c_0\delta_0+f\sum_{i=1}^k\mathcal{H}^1|_{L_i}$$ 
%    such that $u\in C^{\infty}(\mathbb{R}^n\setminus \{L_1,\cdots,L_k\})$ and $f\geq c_1$ on $\cup_{i=1}^k \alpha L_i$, for any $0<\alpha<1$ and for some positive constants $c_0,c_1$ depending on lengths of line segments and dimension.

%\end{thm}
\begin{proof}
    We quadratically rescale everything so that all the line segments lie in $B_\epsilon$, where $\epsilon>0$ is a constant to be fixed later. Also, let $d_i$ denote the direction of each line segment $L_i$ for each $i=1,\cdots,k$. We now solve the obstacle problem as in Lemma \ref{obstacle_problem} with parameters $g(x)=(g_{n,\alpha}+\sum_i \text{dist}(\cdot, \langle d_i\rangle)/8k)(\epsilon x)\big|_{\cup L_i}$, $\phi = W_n+10$, $\mu = 1dx$ and $U = B_R$ for any $R>1$. For any vector $e\in \mathbb{R}^n$, we denote by $\langle e\rangle$ the linear subspace generated by vector $e$. We denote the solution to be $u_R$.\\
        
    \textit{Boundedness from Above:} Boundedness from above follows similarly as in the
    proof of Theorem \ref{line}.\\
    
    \textit{Persistence of singularities:} We set up the problem as before by considering $D = W_n-W_n(\epsilon \textbf{e})$ for $\textbf{e}\in \mathbb{S}^{n-1}$. We have $D\in \mathcal{F}$ since $D \leq 0 \leq g$ in $B_\epsilon$. Now choose $\epsilon$ and $\rho$ such that $\epsilon \ll\rho \ll 1$. On $\partial B_\rho$ we have 
$$D = W_n(\rho\textbf{e})-W_n(\epsilon\textbf{e}) \geq \rho - W_n(\epsilon\textbf{e}) \geq \rho/2.$$
By our choice of $\rho$, we can ensure $\rho^2/2 \leq \rho/16$.  Consider the barrier,
$$\Phi = \frac{|x|^2}{2}+\sum_{i=1}^k\frac{\text{dist}(x,\langle d_i\rangle)}{8k}.$$
It is easy to see $\Phi \leq g$. Moreover, on $\partial B_\rho$, $\Phi \leq \rho^2/2+\rho/8<\rho/2$. We also note that when restricted to $L_i$, $\Phi$ is a smooth function. Thus with arguments as in Theorem \ref{line}, \ref{polytope}, we have $\det(D^2u_R) = 1+f_R\mathcal{H}^1|_{\cup L_i}$. 
\\

    \textit{Regularity:} Regularity follows similarly to the proof in Theorem \ref{polytope}.
\end{proof}

\begin{rem}
    For some $M>1$, if one takes $M|x|^2$ as an obstacle in the proof of Theorem \ref{cross}, one can not obtain a $Y$ shaped singularity in general, since at the origin, one won't be able to develop a singular mass. However, one would still get some $\mathcal{H}^1$ type singularities along each of the line segments by following the argument above with an appropriately chosen weight $w_i$ in the barrier $\frac{|x-\cdot|^2}{2}+w_i\text{dist}(x,\langle e_i\rangle)+L_0$. The choice of weights $w_i$ depends on the relative arrangement of the lines $L_i$ and the point we are at. Here $L_0$ is the supporting hyperplane to $M|x|^2$ at some point on $L_i$.  
\end{rem}

%By examining the proof of \ref{polytope} one can prove the following theorem,

\subsection{Proof of Theorem  \ref{smooth_boundary}}

\begin{proof}
    The proof follows by simply using $|x|^2/2$ (or any smooth convex function) restricted to $\partial \Omega$ as an obstacle and using the techniques developed above.
\end{proof}
 \begin{figure}[ht]
    \centering
    \includegraphics[width=0.4\textwidth]{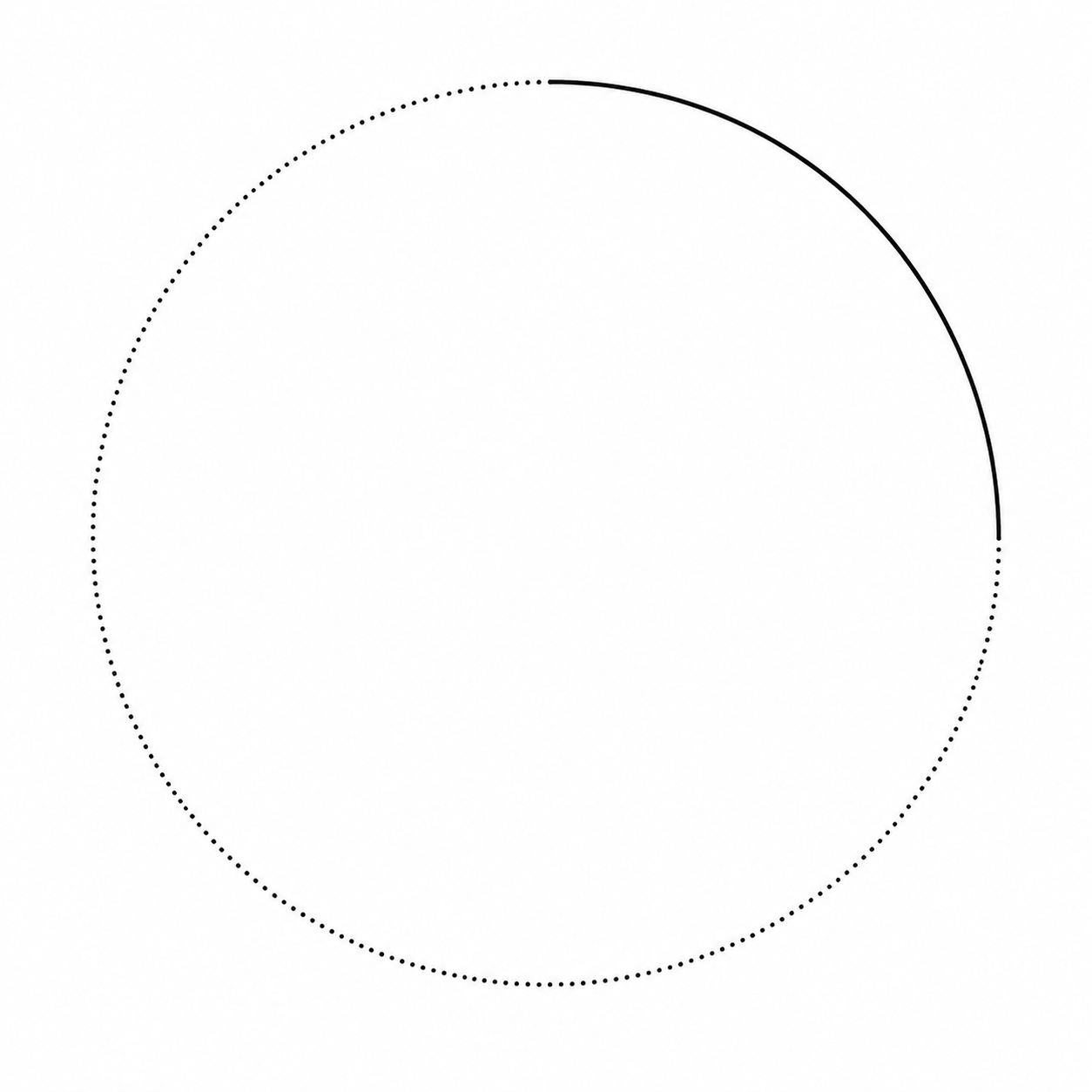}
    \caption{Partial prescription of obstacle on a circle}
    \label{circ}
\end{figure}

\begin{rem}

   If we want to partially prescribe $\mathcal{H}^{n-1}$ measure on parts of the boundary, then we need to work with obstacles built in Section \ref{obstacles} as discussed in Remark \ref{partial_data}. For example, in $\mathbb{R}^2$, we can partially prescribe the data only on a part  a circle and get a singularity like Figure \ref{circ}.
\end{rem}

%The above Theorem \ref{smooth_boundary} can be thought of as performing the Perron's method to find solution to the Dirichlet problem for Monge-Ampere equation in a "global" fashion.

\section{Interaction of Singularities}\label{interaction_section}

As discussed in the introduction, we take codimensions $k=1,2$ in the statement of Theorem \ref{polytope} to avoid the development of singularities of the solution away from the support of the obstacle. In this section we investigate the case when the codimension $k\geq 3$. We observe through some simple examples that singularities start to “communicate". We also note that this interaction depends strongly on the geometry and arrangement of the obstacles.\\

Consider the following examples:
\begin{ex}\label{integrate}
  In dimension 4, denote the coordinates as $(x,y,z)\in \mathbb{R}\times\mathbb{R}^2\times\mathbb{R}$. Now consider a function $\varphi = x^2/2+w_{1,3}(y,z)$. One can check that it solves $\det(D^2\varphi)\geq 1$ on the slab $|z| \leq \rho_n$, has singularities on $|y|=0$, and strict convexity fails along lines parallel to the $z-$axis. However in one perpendicular direction, namely along the $x-$axis, it has quadratic growth.
  
  Thus in the case of partial prescription of data (see Remark \ref{partial_data}) along 2 parallel line segments: $z = \pm \rho_n/100$ on the $xz-$plane for $|x| \leq 1$, the solution of the obstacle problem \ref{obstacle_problem} with appropriate parameters as in the proofs of Theorems \ref{line}, \ref{polytope} and \ref{cross} would have a $1+1=2$ dimensional singularity by using $\varphi$ as a barrier from below. We gain an extra dimension for the singular set as the $1$ dimensional singularities “integrate" along the obstacle (see Figure \ref{lineinteract}).
  This shows, even in the obstacle problems we consider singularities can arise away from the obstacle. From the perspective of optimal transport, this shows singularities can also be produced by interaction between masses distributed along line segments, rather than just points as in \cite{M1} and \cite{MR1}.

\end{ex}
\begin{figure}[ht]
    \centering
    \includegraphics[width=0.6\textwidth]{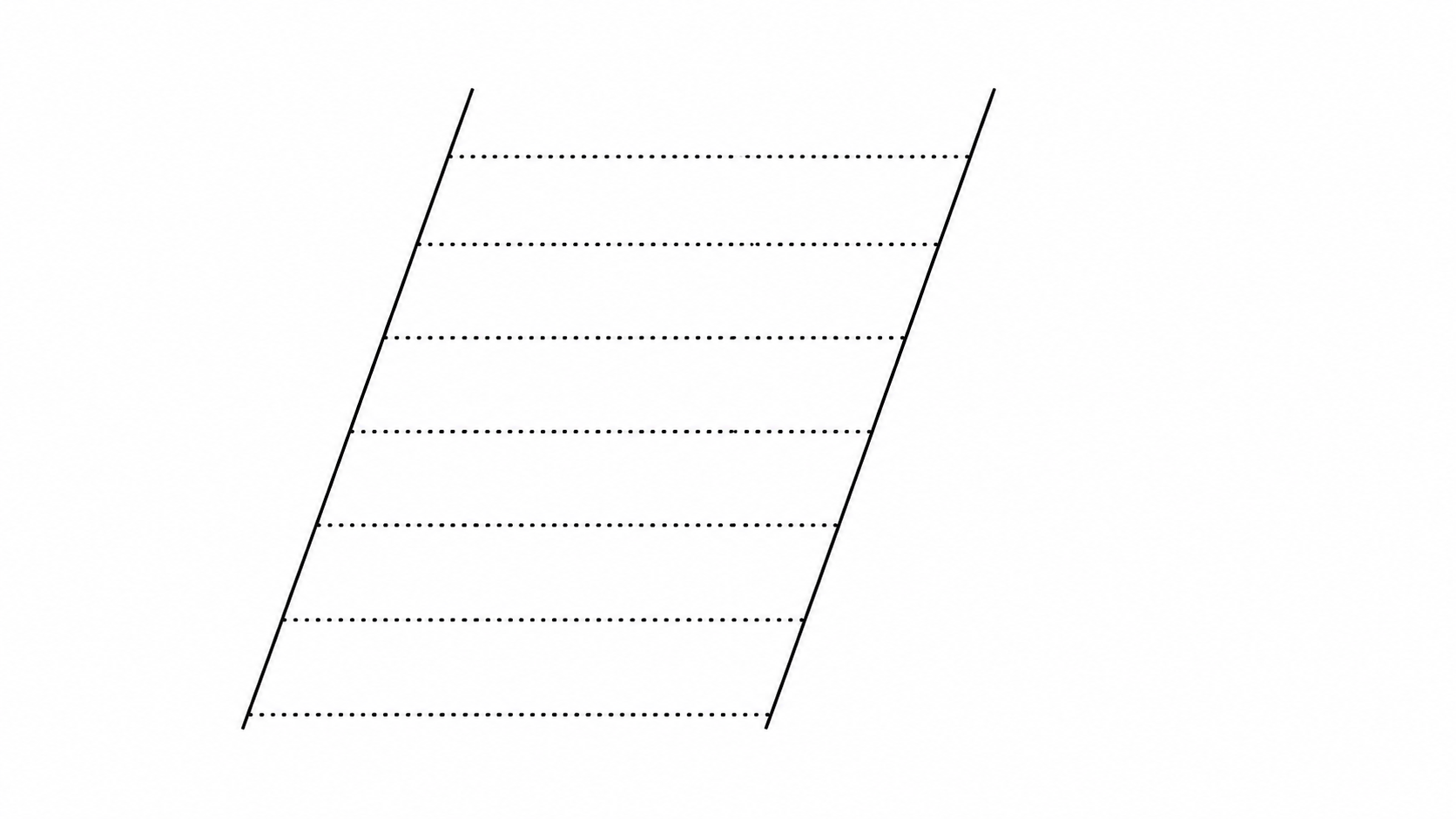}
    \caption{Interaction between line segments}
    \label{lineinteract}
\end{figure}

We can generalize the above example to an arbitrary dimension and codimension. Consider coordinates in an $n$ dimensional space as $(x,y,z) \in \mathbb{R}^{n-k}\times \mathbb{R}^{k-1}\times \mathbb{R}$ where $k\geq 3$ will be the codimension of the support of our obstacle. Using the same strategy as the example above, by taking the barrier as $\varphi = |x|^2/2+w_{1,k}(y,z)$ we get $n-k+1$ dimensional singularities arising away from the support of the obstacle, $\Sigma = \{(x,y,z)\;| \; |y|=0, z=\pm \rho_n/100 \text{ and } |x|\leq 1\}$.

We study an example where the geometry of the obstacle severely restricts the singular set of the solution even when $k\geq 3$. 

\begin{figure}[ht]
    \centering
    \includegraphics[width=0.5\textwidth]{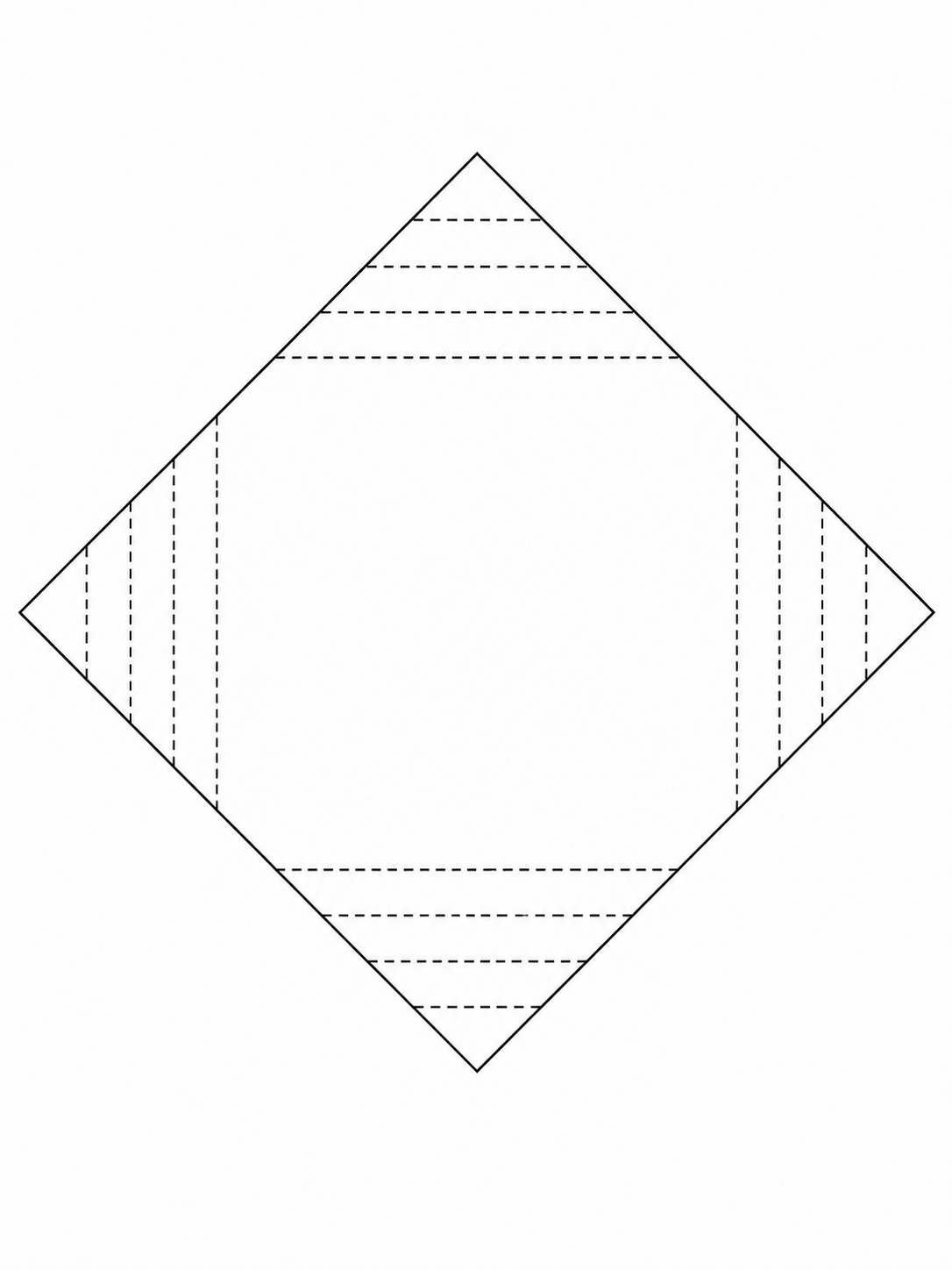}
    \caption{Interactions in a square}
    \label{sqinteract}
\end{figure}

\begin{ex}\label{square_4d}
    In dimension 4, denote the coordinates as $(x,y,z)\in \mathbb{R}\times\mathbb{R}^2\times\mathbb{R}$. Consider partial prescription of data (see Remark \ref{partial_data}) on a square on the $xz-$plane: $\Sigma = \{(x,0,z)\;|\;|x|+|z|= 1\}$. We solve the obstacle problem \ref{obstacle_problem} with appropriate parameters and $\alpha = 0.9$ as in the proofs of Theorems \ref{line}, \ref{polytope} and \ref{cross}. We further note that in this setting, $\min\{g\} = 0$ and this minimum is achieved at points $M = \{(\pm \frac{1}{2},\pm \frac{1}{2}),(\pm \frac{1}{2},-\pm \frac{1}{2})\}$. Here $g$ is the appropriately chosen obstacle. We claim that the singular set $S$ of the solution $u$ has the following property  $ \{(x,0,z)\;|\; 3/4 \geq \max\{|x|,|z|\} \geq  1/2\}\subseteq  S \subseteq  \{(x,0,z)\;|\; \max\{|x|,|z|\} \geq  1/2\}$. We argue similar to Theorem \ref{polytope}. By Theorem \ref{CL3_exterior}, $u \in C^{\infty}(\mathbb{R}^4\setminus \overline{U})$, for $U = \{(x,0,z)\;|\;|x|+|z| < 1\}$. Now assume a singularity arises away from the obstacle in $U$. Extreme points of a singularity of the form $\{u=L\}$, for some supporting hyperplane $L$, should lie on the set $\{u=g\}$. There are two cases that can arise:\\
    \begin{itemize}
        \item Case I: When two adjacent edges try to “communicate". This is possible by using appropriate translations of the barrier $\varphi_{\delta} = \delta x^2/2+C(\delta)\tilde{w}_{1,3}(y,z)$ from below for the obstacle problem (note that for a given $\delta$, we can choose $C(\delta)$ large enough to ensure $\det(D^2\varphi_\delta) \geq 1$). Here $\tilde{w}_{1,3}$ is a rescaled Pogorelov type barrier with domain $\{|z| \leq M\}$ for $1\ll M$. However this only guarantees contact with the obstacle on points when $|z| \leq 1/2$. Now due to symmetry, by rotating our barrier, we also achieve singularities on lines parallel to the $x-$axis. Therefore $S \supset \{(x,0,z)\;|\; 3/4 \geq \max\{|x|,|z|\} \geq  1/2\} $ (see Figure \ref{sqinteract}).

        \item Case II: We argue $\{u=L\} \cap \{(x,0,z)\;|\; \max\{|x|,|z|\} < 1/2\} = \emptyset$.  Since $u$ is linear on the singularity $\{u=L\}$, we get that $u \geq 0$ on the set $\{u=L\}$ by using the values of $u$ on the extreme points that lie in $\{u=g\}$. However $u = 0$ on $M$. Therefore $u\leq 0$ on $D = \{(x,0,z)\;|\; \max\{|x|,|z|\} < 1/2\}$. If $\{u=L\} \cap \{(x,0,z)\;|\; \max\{|x|,|z|\} < 1/2\} \neq \emptyset$, we get a contradiction unless $u=0$ on $L$. However, when $u=0$ on $L$, it is easy to see that $L$ restricted to the $xz-$plane has to be $0$. Moreover $L$ was a supporting hyperplane to $u$, therefore on the $xz-$plane $u \geq 0$. This shows that $\{u=0\} \supset \{(x,0,z)\;|\; \max\{|x|,|z|\} \leq 1/2\}$. This again is a contradiction because the dimension of a set $\{u=L\}$ for solutions to $\det(D^2u) \geq 1$ in $\mathbb{R}^4$ cannot be more than $1$ (see \cite[Proposition 2.3]{MR1}).
    \end{itemize}

\end{ex}
\begin{rem}
    In the above example \ref{square_4d}, the only reason a singularity couldn't arise in $\{(x,0,z)\;|\;  \max\{|x|,|z|\} \leq  1/2\}$ was because a $2$ dimensional failure of strict convexity of the form $\{u=L\}$ is not possible in $\mathbb{R}^4$ for solutions to $\det(D^2u) \geq 1$. In particular if we were in $\mathbb{R}^n$ for $n\geq 5$ there will be a singularity $\{u=0\}\supset \{(x,0,z)\;|\;  \max\{|x|,|z|\} \leq  1/2\}$ by using a Pogorelov type barrier $w_{2,n}$.
\end{rem}

  The above example shows that unless the geometry of the support of the obstacle is explicitly given, it is not possible in general to comment on the exact structure of the singular set. However we can comment when singularities may and may not arise in the context of the following theorem. To that end we state the following general result for a convex polytope $P$:

\begin{thm}[Volume obstruction to strict convexity]\label{vol_obs}
    For $R>0$, let $u_R$ be a solution to the following obstacle problem:
    \begin{equation}
        u_R = \sup \left\{v : v \in C\left(\overline{U}\right) \text{ convex},\, v \leq g \text{ in } U,\, v|_{\partial U} \leq \phi,\, Mv \geq \mu\right\}.
    \end{equation}
    Here $U = B_R(0)$, $g(x) = g_{n,\alpha}(\epsilon x)|_{P[n-k_1]}$, $\phi = W_n(R\textbf{e}_1)+10$, $\mu = 1dx$ and $\epsilon$ is a small constant depending on the convex polytope $P$. Then $u_R$ solves,
    $$\det(D^2u_R) = 1 + f\mathcal{H}^{n-k_1}|_{P[n-k_1]}$$
    for some $f\in L^{\infty}(P[n-k_1])$ in $B_R(0)$.
    For $k_2 < n/2$, consider a $k_2$ dimensional singularity away from $P[n-k_1]$ of the form $\{u_R=L\}$ for some supporting hyperplane $L$. Then the extreme points of $\{u_R=L\}$ always lie in the set $\{u_R=g\} \subset P[n-k_1]$. Moreover, if the singularity $\{u=L\}$ is contained in an $m$ dimensional face, where $m$ is the dimension of the convex hull of the $n-k_1$ faces on which the extreme points of $\{u_R = L\}$ lie, then $n> m+k_2$. 
\end{thm}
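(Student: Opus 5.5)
The theorem has three parts, and I will prove them in order: (i) the equation for $u_R$; (ii) the location of the extreme points of $\{u_R=L\}$; (iii) the dimension count $n>m+k_2$. Each part reuses machinery already developed for Theorems \ref{line} and \ref{polytope}.

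For (i), Lemma \ref{obstacle_problem} gives $\mathcal{M}u_R=1$ on $\{u_R<g\}\cap B_R$. Since $g=+\infty$ off $P[n-k_1]$, this already gives $\mathcal{M}u_R=1$ on $B_R\setminus P[n-k_1]$. It remains to identify the singular part on $P[n-k_1]$ and show it is of the form $f\mathcal{H}^{n-k_1}$ with $f\in L^\infty$. I will repeat the contact-point argument from the proof of Theorem \ref{line}. At a point $x$ of an open $(n-k_1)$-face with $u_R(x)=g(x)$, every $p\in\partial u_R(x)$ has tangential component equal to $\nabla g$ along the face, because $g$ is smooth there and $u_R\le g$ with equality at $x$. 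Hence $\partial u_R(x)$ lies in a $k_1$-dimensional affine plane. The normal directions are bounded using the upper barrier $W_n+10$ and the lower barrier $D$. So $\partial u_R(x)$ is a bounded subset of a $k_1$-plane, with $\mathcal{H}^{k_1}$-measure bounded uniformly in $x$. By the coarea/area formula, $|\partial u_R(E)|\le C\,\mathcal{H}^{n-k_1}(E)$ for $E$ in the face. Lower-dimensional faces of the skeleton carry zero $\mathcal{H}^{n-k_1}$ measure, and near them the same bound holds because $g$ is a restriction of the smooth function $g_{n,\alpha}(\epsilon\,\cdot)$. This gives $f\in L^\infty$.

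For (ii), let $\Sigma=\{u_R=L\}$, a compact convex set of dimension $k_2\ge1$ not contained in $P[n-k_1]$. Take an extreme point $q$ of $\Sigma$. If $q\notin\{u_R=g\}$, then near $q$ we are in $\{u_R<g\}$, so $1\le\det D^2u_R\le 1$ there. Proposition \ref{no_extremal} then forbids $q$ from being an extreme point of the contact set, unless $q\in\partial B_R$. The boundary case is excluded because $u_R$ is strictly below $\phi$ near $\partial B_R$, by comparison with the radial barrier as in the regularity step of Theorem \ref{polytope}. Hence every extreme point lies in $\{u_R=g\}\subset P[n-k_1]$.

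For (iii), which I expect to be the main obstacle, I will use the section-volume argument from the proof of Theorem \ref{polytope}. Let the extreme points of $\Sigma$ lie on faces $F_1,\dots,F_j$ of $P[n-k_1]$, and let $A$ be the $m$-dimensional affine hull of their convex hull, so that $\Sigma\subset A$.
\begin{enumerate}
\item Pick an extreme point $q\in\Sigma\cap F_i$ with $u_R(q)=g(q)$, and consider the sections $S_h(q)$ for the supporting slope of $L$.
\item Along $A$, the smoothness of $g$ restricted to the faces, together with $u_R\le g$, should force $S_h(q)\cap A$ to contain a set whose measure is comparable to $\sqrt h$ in the $m-k_2$ directions of $A$ transverse to $\Sigma$ inside $A$, and to $1$ along the $k_2$ directions of $\Sigma$. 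Establishing this lower bound is the delicate step: I will first try to handle the directions of $A$ not tangent to a single face by taking convex hulls of the sets $B_{\delta\sqrt h}\cap F_i$ with $\Sigma$.
\item In the remaining $n-m$ directions, convexity only gives width at least $\omega(h)$.
\item Hence $|S_h(q)|\gtrsim h^{(m-k_2)/2}\,\omega(h)^{n-m}$.
\item On the other hand, $\det D^2u_R\ge1$ gives $|S_h(q)|\lesssim h^{n/2}$ by \cite[Lemma 2.2, Lemma 2.3]{M2}.
\end{enumerate}
Comparing the two bounds as $h\to0$, I need $(m-k_2)/2+(n-m)<n/2$ to fail unless the singularity is absent. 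Simplifying, the count yields the constraint $n>m+k_2$, the analogue of the Pogorelov restriction $k_2<n/2$ encoded by the barriers $w_{k,n}$.

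I am not yet sure this exponent comparison lands exactly on the stated inequality. If it does not, I would sharpen the lower bound in the $n-m$ complementary directions. There I would try to get $o(h)$ growth after subtracting a linear function, as in the commented remark after Theorem \ref{polytope}, rather than the cruder $\omega(h)$.
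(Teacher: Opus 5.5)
Your route is the paper's. Extreme points are located with Proposition~\ref{no_extremal}. The dimension bound comes from comparing a lower bound for $|S_h(e_1,p)|$ with the upper bound $|S_h|\le Ch^{n/2}$ from \cite{M2}; the lower bound is built from $\sqrt h$-disks in the faces through the extreme points, the $k_2$-dimensional set $\{u_R=L\}$, and the transverse widths.

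\textbf{The gap is in the final count, exactly at the borderline $n=m+k_2$.} Your inequality $(m-k_2)/2+(n-m)<n/2$ is equivalent to $n<m+k_2$, so its failure only gives $n\ge m+k_2$. The step to $n>m+k_2$ is unjustified, as you suspected. Excluding $n=m+k_2$ requires a width strictly larger than order $h$ in at least one direction transverse to $A$. The reason you give in step 3 does not supply this: convexity only makes $u_R-L$ Lipschitz near $\Sigma$, which gives width at least $ch$.

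Note that in the paper $\omega(h)$ denotes a function with $h/\omega(h)\to0$. It is exactly the improvement you propose as a ``sharpening'', not a cruder bound. The paper's estimate is
$|S_h(e_1,p)|\ge l\,h^{(m-k_2)/2}\,\omega(h)\,h^{n-m-1}$,
with Lipschitz width in $n-m-1$ transverse directions and the gain in a single one. If $n\le m+k_2$, then $(m-k_2)/2+(n-m)\le n/2$. So this lower bound is at least $l\,h^{n/2}\,\omega(h)/h\gg h^{n/2}$, which is the contradiction, equality case included.

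\textbf{One factor $\omega(h)$ suffices.} There is no reason to expect the gain in all $n-m$ directions at once, so your $\omega(h)^{n-m}$ overclaims. The single factor can be produced by the choice of slope, in the spirit of your last paragraph. Take $z$ in the relative interior of $\Sigma$ and a unit vector $d$ transverse to $A$. Replace the slope of $L$ by a $p\in\partial u_R(z)$ maximizing $\langle q,d\rangle$ over $q\in\partial u_R(z)$.
\begin{itemize}
\item Since $u_R$ is affine on $\Sigma$ and $z$ is a relative interior point, the plane of slope $p$ through $(z,u_R(z))$ still agrees with $u_R$ on $\Sigma$. So it still touches $g$ from below at the $e_i$, and the $\sqrt h$-disks in the faces stay in the section.
\item Since $u_R'(z;d)=\max_{q\in\partial u_R(z)}\langle q,d\rangle=\langle p,d\rangle$, you get $u_R-L=o(t)$ along $z+td$ as $t\downarrow0$. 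That is, the section has width $\omega(h)$ in direction $d$.
\end{itemize}

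\textbf{Two smaller points, on which the paper is equally silent.}
\begin{itemize}
\item In (ii), comparison with the radial barrier near $\partial B_R$ does not by itself prevent $\{u_R=L\}$ from reaching $\partial B_R$.
\item In (i), at points where several $(n-k_1)$-faces meet, $u_R\le g$ only gives one-sided constraints on $\partial u_R(x)$. So $\partial u_R(x)$ need not lie in a $k_1$-plane, and your argument does not rule out $\mathcal{M}u_R$ charging the lower-dimensional faces.
\end{itemize}
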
  
\begin{proof}
    Since we know away from the support of the obstacle, $u_R$ solves $\det(D^2u_R)=1$, an extreme point of a singular set cannot lie away from the support of the obstacle. We rotate our coordinates such that $\mathbb{R}^m\times \mathbb{R}^{n-m}$ and the $m$ dimensional face lies on $\mathbb{R}^{m}\times \{0\}^{n-m}$. \\
    
    We prove the statement by contradiction, i.e., we assume $n\leq m+k_2$. Let $\{e_1,e_2,\cdots,e_{k_2+1},\cdots,e_{N}\}\subset \{u_R=L\}\cap \{u_R=g\}$ be the extreme points of the singularity $\{u_R=L\}$. Let $p$ denote the slope of the hyperplane $L$. Note that $S_h(e_i,p) = S_h(e_j,p)$ for all $i,j\in \{1,2,\cdots,N\}$. Since $u_R$ is strictly convex on $n-k_1$ dimensional faces, there can only be a single extreme point lying on an $n-k_1$ dimensional face. Now that we have ensured that $m$ is also the dimension of the convex hull of the $n-k_1$ faces on which the extreme points of $\{u_R = L\}$ lie, we know the behavior of $u_R$ in the section $S_h(e_1,h)$ along every direction in $\mathbb{R}^m\times \{0\}$. Therefore $S_h(e_1,p) \supset \cup_{i=1}^{N} (B_{\sqrt{h}}(e_i)\cap P[n-k_1])\cup \{u_R=L\}$ and volume of the section is at least $|S_h(e_1,p)| \geq l (\sqrt{h})^{m-k_2}\omega(h) h^{n-m-1}$, here $l$ is the $k_2$ dimensional Hausdorff measure of $\{u_R=L\}$. By using the fact that volume of a section of a solution to $\det(D^2u)\geq 1$ cannot be more than $\sim h^{n/2}$, we get $n > m+k_2$.
 
\end{proof}

While the statement of the above Theorem \ref{vol_obs} holds for any general convex polytope, in the specific case of a regular convex polytope, a very symmetric convex polytope, we can show that the above volume obstruction is the only possible obstruction to singularity formation in the context of the above result, i.e., the above Theorem is an if and only if statement for this symmetric case.

Let $P_n$ be a regular convex polytope in $\mathbb{R}^n$. Let $k_1,k_2,m$ and $\epsilon$ represent parameters as in Theorem \ref{vol_obs}. Then if $m+k_2< n$, we construct a $k_2$ dimensional singularity on an $m$ dimensional face. Consider $\rho$ such that $\epsilon \ll \rho \ll 1$. We rotate our coordinates such that $\mathbb{R}^n = \mathbb{R}^m\times \mathbb{R}^{n-m}$ and the $m$ dimensional face lies on $\mathbb{R}^{m}\times \{0\}^{n-m}$. Moreover we assume $P_n$ lies in half space $\mathbb{H} = \{(x_1,\cdots,x_m,\cdots,x_n)\;|\;x_i\geq 0 \;\forall i\in \{1,2,\cdots,m\}\}$. By symmetry we assume the barycentre of $P_n$ restricted to $\mathbb{R}^m\times \{0\}^{n-m}$ is the origin. Also consider a linear function $L$ that is zero along $\mathbb{R}^m\times \{0\}^{n-m}$, has $|\nabla L|=1/8$ and $P\subseteq \{L<0\}$. Consider the further splitting of coordinates as $\mathbb{R}^m\times \{0\}^{n-m} = \mathbb{R}^{m-k_2}\times\mathbb{R}^{k_2}\times\{0\}^{n-m}$. Now consider the following barrier, $$\varphi_{k_2,m} = \sum_{i=1}^{m-k_2}\frac{x_i^2}{2}+w_{k_2,n-m+k_2}+L.$$ 
Note that the sum appearing in $\varphi_{k_2,m}$ could be empty. By a small choice of $\epsilon$ and therefore $\rho$, we can also ensure $w_{k_2,n-m+k_2}+L \leq 0$ on $P_n$. This is possible because $w_{k_2,n-m+k_2}\sim C(n)dist(\cdot, \mathbb{R}^m\times \{0\}^{n-m})^{1+1/n}$, so the linear factor $L$ can ensure negativity for small $\epsilon$. Therefore $\varphi_{k_2,m} \in \mathcal{F}$, where $\mathcal{F}$ is,
$$\mathcal{F} = \left\{v : v \in C\left(\overline{U}\right) \text{ convex},\, v \leq g \text{ in } U,\, v|_{\partial U} \leq \phi,\, Mv \geq \mu\right\}$$
and the parameters are defined in Theorem \ref{vol_obs}. Moreover by an appropriate translation one can ensure $\varphi_{k_2,m}(x_1,\cdots,x_m,0,\cdots,0) = \sum_{i=1}^{m-k_2} \frac{x_i^{2}}{2}$ touches the obstacle. This is where symmetry becomes crucial. The degree of freedom available in our barrier $\varphi_{k_2,m}$, namely $m-k_2$, can be extremely small compared to the degree of freedom needed to touch the obstacle from below at sufficiently many points to ensure the development of a $k_2$ dimensional singularity independent of $n-k_1$ and $m-k_2$. We can now also consider barriers of the form $\varphi_{k_2,m,\delta} =  \sum_{i=1}^{m-k_2}\delta_i\frac{x_i^2}{2}+C(\delta)(w_{k_2,n-m+k_2}+L) $, for $0<\delta_i<1$ and appropriate $C(\delta)$ such that $\det(D^2\varphi_{k_2,m,\delta})\geq 1$. This ensures $\varphi_{k_2,m,\delta}\in \mathcal{F}$. By varying the parameters $\delta_i$ in $\varphi_{k_2,m,\delta}$, we can force the $k_2$ dimensional singularities to “integrate" over the $n-k_1$ dimensional obstacles in the sense of Example \ref{integrate}. This leads to a singular set of dimension higher than $k_2$.

\section{Open Questions}\label{open}
In this final section we mention some open questions open questions for further study:
\begin{itemize}
    \item An interesting open problem in this direction is to analyze the fine structure of solutions near the singular set.  In particular, one may ask whether, in the presence of singular measures as our construction, the solution admits a precise asymptotic expansion near singular points, and how this expansion depends on the geometry of the singular set. When the singular set is just a Dirac mass, this was done in \cite{HTW}.
    \item In the examples presented in Section \ref{examples}, we construct optimal transport maps and gain some understanding of the associated singular structures in cases where one of the domains is non-convex. However, even in relatively simple configurations, it is not always clear how to analytically characterize the singular set, nor whether one can explicitly describe the corresponding optimal transport map. In general, it would be very interesting to develop methods for explicitly constructing optimal transport maps in such nontrivial geometric settings, as well as for identifying and describing their singular sets.
\end{itemize}

\section{Acknowledgments}
The authors would like to thank Professor Connor Mooney for several helpful discussions related to this paper. Arghya Rakshit would also like to thank Professor Robert McCann for discussions related to the optimal transport examples appearing in this paper. Aranya Sen was supported by NSF grant DMS-2143668 of Professor Connor Mooney.
%others for feedback


\begin{thebibliography}{9}

\bibitem{AC}
E.~Andriyanova, S.~Chen,
\newblock Boundary $C^{1,\alpha}$ regularity of potential functions in optimal transportation with quadratic cost,
\newblock \emph{Analysis \& PDE} \textbf{9} (2016), no.~6, 1483–1496..
\bibitem{B1}
Y.~Brenier,
\newblock D\'ecomposition polaire et r\'earrangement monotone des champs de vecteurs. (French)
\newblock \emph{C. R. Acad. Sci. Paris S\'er. I Math.} \textbf{305} (1987), no.~19, 805--808.

\bibitem{B2}
Y.~Brenier,
\newblock Polar factorization and monotone rearrangement of vector-valued functions,
\newblock \emph{Comm. Pure Appl. Math.} \textbf{44} (1991), no.~4, 375--417.

\bibitem{C0}
L.~A.~Caffarelli,
\newblock A localization property of viscosity solutions to the {Monge--Amp\`ere} equation and their strict convexity,
\newblock \emph{Ann. of Math.} \textbf{131} (1990), 129--134.

\bibitem{C1}
L.~A.~Caffarelli,
\newblock The regularity of mappings with a convex potential,
\newblock \emph{J. Amer. Math. Soc.} \textbf{5} (1992), 99--104.

\bibitem{C2}
L.~A.~Caffarelli,
\newblock Boundary regularity of maps with convex potentials II,
\newblock \emph{Ann. Math.} \textbf{144} (1996), no.~3, 453--496.



\bibitem{CL2}
L.~A.~Caffarelli, Y.~Y.~Li,
\newblock Some multi-valued solutions to the {Monge--Amp\`ere} equation,
\newblock \emph{Comm. Anal. Geom.} \textbf{14} (2006), 411--441.

\bibitem{CL3}
L.~A.~Caffarelli, Y.~Li,
\newblock An extension to a theorem of J\"orgens, Calabi, and Pogorelov,
\newblock \emph{Comm. Pure Appl. Math.} \textbf{56} (2003), 549--583.

\bibitem{CJLPR}
O.~Chodosh, V.~Jain, M.~Lindsey, L.~Panchev,  Y.~A.~Rubinstein,
\newblock On discontinuity of planar optimal transport maps,
\newblock \emph{J. Topol. Anal.} \textbf{7} (2015), no.~2, 239--260.

\bibitem{CT}
T.~C.~Collins, F.~Tong,
\newblock Boundary regularity of optimal transport maps on convex domains,
\newblock Preprint 2025, arXiv:2507.05395.

\bibitem{EG}
L.~C.~Evans, R.~F.~Gariepy,
\newblock Measure theory and fine properties of functions,
\newblock \emph{Textbooks in Mathematics}, Revised edition,
\newblock CRC Press, Boca Raton, FL, 2015.

\bibitem{real}
X.~Fern\'andez-Real,
\newblock The thin obstacle problem: a survey,
\newblock \emph{Publ. Mat.} \textbf{66} (2022), no.~1, 3--55.

\bibitem{H1}
R. ~Andreasson, J.~Hultgren,
\newblock Solvability of {Monge--Amp\`ere} equations and tropical affine structures on reflexive polytopes,
\newblock arXiv:2303.05276, 2023.

\bibitem{HJMM}
R. ~Andreasson, J.~Hultgren, M.~Jonsson, E.~Mazzon, N.~McCleerey,
\newblock Regularity of the solution to a real {Monge--Amp\`ere} equation on the boundary of a simplex,
\newblock \emph{Int. Math. Res. Not.} \textbf{2025} (2025), no.~3.


\bibitem{HTW}
G.~Huang, L.~Tang, X.-J.~Wang,
\newblock Regularity of free boundary for the {Monge--Amp\`ere} obstacle problem,
\newblock \emph{Duke Math. J.} \textbf{173} (2024), no.~12, 2259--2313.


\bibitem{JL}
M.~Jacobs, F.~L\'eger,
\newblock A fast approach to optimal transport: The back-and-forth method,
\newblock \emph{Numer. Math.} \textbf{146} (2020), no. 3, 513–544.




\bibitem{JX1}
T.~Jin, J.~Xiong,
\newblock Solutions of some Monge--Amp\`ere equations with isolated and line singularities,
\newblock \emph{Adv. Math.} \textbf{289} (2016), 114--141.

\bibitem{JTX1}
T.~Jin, X.~Tu,  J.~Xiong,
\newblock Extremal Alexandrov estimates: singularities, obstacles, and stability,
\newblock Preprint 2026, arXiv:2602.06468.

\bibitem{JTX2}
T.~Jin, X.~Tu, J.~Xiong,
\newblock Sharp global Alexandrov estimates and entire solutions of Monge--Amp`ere equations,
\newblock \emph{Math. Ann.} \textbf{395} (2026), no.~2, Article No.~41, 29 pp.


\bibitem{L1}
Y.~Li,
\newblock Strominger--Yau--Zaslow conjecture for Calabi--Yau hypersurfaces in the Fermat family,
\newblock \emph{Acta Math.} \textbf{229} (2022), no.~1, 1--53.

\bibitem{Lo1}
J.~C.~Loftin,
\newblock Singular semi-flat Calabi--Yau metrics on $S^2$,
\newblock \emph{Comm. Anal. Geom.} \textbf{13} (2005), 333--361.

\bibitem{LYZ1}
J.~Loftin, S.-T.~Yau, E.~Zaslow,
\newblock Affine manifolds, SYZ geometry and the ``Y''-vertex,
\newblock \emph{J. Differential Geom.} \textbf{71} (2005), 129--158.

\bibitem{McCann}
R.~J.~McCann,
\newblock A convexity principle for interacting gases,
\newblock \emph{Adv. Math.} \textbf{128} (1997), 153--179.

\bibitem{M1}
C.~Mooney,
\newblock Solutions to the {Monge--Amp\`ere} equation with polyhedral and {Y}-shaped singularities,
\newblock \emph{J. Geom. Anal.} \textbf{31} (2021), 9509--9526.

\bibitem{M2}
C.~Mooney,
\newblock Partial regularity for singular solutions to the Monge--Amp\`ere equation,
\newblock \emph{Comm. Pure Appl. Math.} \textbf{68} (2015), 1066--1084.

\bibitem{MR1}
C.~Mooney, A.~Rakshit,
\newblock Singular structures in solutions to the {Monge--Amp\`ere} equation with point masses,
\newblock \emph{Math. Eng.} \textbf{5} (2023), Paper No.~083, 11 pp.

\bibitem{MR2}
C.~Mooney, A.~Rakshit,
\newblock Sobolev regularity for optimal transport maps of non-convex planar domains,
\newblock \emph{SIAM J. Math. Anal.} \textbf{56} (2024), 4742--4758.


\bibitem{SY}
O.~Savin, H.~Yu,
\newblock Regularity of optimal transport between planar convex domains,
\newblock \emph{Duke Math. J.} \textbf{169} (2020), 1305--1327.









\end{thebibliography}
\end{document}